\newcommand{\C}{{\mathbb C}}
\newcommand{\R}{{\mathbb R}}
\newcommand{\Z}{{\mathbb Z}}
\newcommand{\E}{{\mathsf E}}
\renewcommand{\o}[1]{\overline{#1}}
\newcommand{\oA}{{\o{A}}}
\newcommand{\oD}{{\o{D}}}
\newcommand{\del}{\delta}
\newcommand{\eps}{\varepsilon}
\newcommand{\lam}{\lambda}
\newcommand{\sig}{\sigma}
\newcommand{\ome}{\omega}
\newcommand{\Lam}{\Lambda}
\newcommand{\Ome}{\Omega}
\newcommand{\lfl}{\left\lfloor}
\newcommand{\rfl}{\right\rfloor}
\newcommand{\lcl}{\left\lceil}
\newcommand{\rcl}{\right\rceil}
\newcommand{\longc}{,\dotsc,}
\newcommand{\longcap}{\cap\dotsb\cap}
\newcommand{\est}{\varnothing}
\newcommand{\seq}{\subseteq}
\newcommand{\stm}{\setminus}
\newtheorem{claim}{Claim}
\newtheorem{lemma}{Lemma}
\newtheorem{theorem}{Theorem}
\newtheorem{corollary}{Corollary}
\newtheorem{proposition}{Proposition}
\DeclareMathOperator{\supp}{supp}
\newcommand{\refc}[1]{\ref{c:#1}}
\newcommand{\refl}[1]{\ref{l:#1}}
\newcommand{\refm}[1]{\ref{m:#1}}
\newcommand{\reft}[1]{\ref{t:#1}}
\newcommand{\refp}[1]{\ref{p:#1}}
\newcommand{\refs}[1]{\ref{s:#1}}
\newcommand{\refb}[1]{\cite{b:#1}}
\newcommand{\refe}[1]{\eqref{e:#1}}
\newcommand{\sub}[1]{_{\substack{#1}}}
\title{The popularity gap}
\author{Vsevolod F. Lev}
\address[Vsevolod Lev]{Department of mathematics,
  the University of Haifa at Oranim, Tivon 36006, Israel}
\email{seva@math.haifa.ac.il}
\author{Ilya D. Shkredov}
\address[Ilya Shkredov]{Steklov Mathematical Institute, ul. Gubkina, 8,
    Moscow, Russia, 119991}
\email{ilya.shkredov@gmail.com}
\begin{document}
\baselineskip=16pt

\begin{abstract}
Suppose that $A$ is a finite, nonempty subset of a cyclic group of either
infinite or prime order. We show that if the difference set $A-A$ is ``not
too large'', then there is a nonzero group element with at least as many as
$(2+o(1))|A|^2/|A-A|$ representations as a difference of two elements of
$A$; that is, the second largest number of representations is, essentially,
twice the average. Here the coefficient $2$ is best possible.

We also prove continuous and multidimensional versions of this result, and
obtain similar results for sufficiently dense subsets of an arbitrary
abelian group.
\end{abstract}

\maketitle

\section{Background and summary of results}\label{s:intro}

A large body of problems, conjectures, and results in additive combinatorics
assert, in different ways, that, normally, the number-of-representations
function exhibits an irregular behaviour. The general framework for this line
of research is as follows.

For finite subsets $A$ and $B$ and an element $g$ of an abelian group, let
$r_{A,B}(g):=|A\cap(g-B)|$; that is, $r_{A,B}(g)$ is the number of
representations of $g$ as a sum of an element of $A$ and an element of $B$.
How close to a constant function can the function $r_{A,B}$ be?

In this paper we consider representations by differences; that is, we are
concerned with the special case where $B=-A$. Notation-wise, we abbreviate
$r_{A,-A}$ as $r_A$.

Clearly, the largest value attained by the function $r_A$ is $|A|$. Our
principal results show that for the cyclic groups of either infinite or prime
order, the \emph{second} largest value attained by $r_A$ is, essentially, at
least twice the average value.

For a prime $p$, by $\C_p$ we denote the cyclic group of order $p$.
\begin{theorem}\label{t:modp}
Suppose that $p$ is a prime, $A\seq\C_p$ is a nonempty subset, and
$\del\in(0,1/3)$. If
  $$ |A-A| \le \min\{K|A|, 2(p+1)/3 \},\quad K<|A|^{\del}, $$
then there is a nonzero element $d\in\C_p$ such that
$r_A(d)>2K^{-1}|A|(1-2\del\ln(2/\del))$.
\end{theorem}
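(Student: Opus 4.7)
The plan is to rectify $A$ to the integer setting and then combine averaging with structural bounds to extract the popular difference.

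First, since $|A-A|\le 2(p+1)/3$, a standard Freiman rectification shows that $A$ admits a Freiman $2$-isomorphism onto a subset $A'\seq\Z$. As $r_A$ is preserved by such isomorphisms, I may assume $A\seq\Z$, and after translating and scaling, $0,L\in A\seq[0,L]\cap\Z$ with $m:=|A|$ and $s:=|A-A|\le Km$.

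Second, the naive averaging bound over positive differences yields only $\max_{d>0}r_A(d)\ge m^2/s=m/K$, a factor of~$2$ short of the claim. The missing factor must come from the integer structure of $A-A$: for sets of small doubling, the representation function $r_A$ is sharply concentrated near the origin, with the mass $\sum_{d>0}r_A(d)=\binom m2$ essentially carried by the ``inner'' portion of $(0,L]$, while the ``outer'' portion (differences close to $L$) contributes only lower-order mass because of the boundary estimate $r_A(d)\le L+1-d$. Combining this localisation with a Freiman $3k-4$-type input---which for $K$ close to~$2$ forces $A$ to be essentially an arithmetic progression so that $L\approx s/2$---produces the factor-of-$2$ improvement.

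Third, the sharp error $2\del\ln(2/\del)$ should come from a dyadic analysis: partition the $d$'s according to the magnitude of $r_A(d)$, bound each level set using Pl\"unnecke--Ruzsa estimates $|nA-nA|\le K^{2n}|A|$, and sum over the $O(\log(1/\del))$ active scales, each contributing $O(\del)$ to the loss. The hypothesis $K<|A|^\del$ is used crucially to terminate the iteration before the Pl\"unnecke losses overwhelm the gain. The main obstacle I anticipate is pinning down the sharp leading constant $2$ across the entire range $K<|A|^\del$: while the Freiman regime $K\approx 2$ is well understood, extending the ``concentration near the origin'' of $r_A$ (and hence the effective reduction of the averaging support from $L$ to $\sim L/2$) to the full range of $K$ requires either an iterated Pl\"unnecke argument or a generalised Freiman structural result, and this interplay is likely the technical heart of the proof.
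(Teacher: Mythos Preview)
Your plan has a genuine gap: it is, at bottom, a refined averaging argument, and the paper itself shows (Claim~\refm{measure0}) that no such argument can succeed. Concretely, there exist sets $A$ for which the number of differences $d$ with $r_A(d)\ge(1-\eps)\,2|A|^2/|A-A|$ is at most $2\eps|A-A|$; the popular differences form a set of ``measure zero'' inside $A-A$, so localising the mass $\sum_d r_A(d)=|A|^2$ to a subinterval of $A-A$ and averaging there cannot produce the coefficient~$2$. Your two concrete inputs do not rescue this. The Freiman $3k-4$ theorem only says something when $K$ is bounded (essentially $K<3$), whereas the hypothesis allows $K$ as large as $|A|^\del$; in this regime there is no reason for $A$ to look like a progression or for $L\approx s/2$. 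And the proposed dyadic/Pl\"unnecke step---bounding level sets $\{d:r_A(d)\ge t\}$ via $|nA-nA|\le K^{2n}|A|$---does not connect: level sets of $r_A$ are not iterated sumsets, and there is no mechanism offered for turning Pl\"unnecke growth into the precise loss $2\del\ln(2/\del)$.

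The paper's route is entirely different and avoids averaging over $A-A$. It works with higher energies: from Cauchy--Schwarz one gets
\[
  |A|^{2k+2}\le |\supp(R_A^{(k+1)})|\cdot \E_{k+1}(A)
             \le T_D^{(k)}(D)\cdot\big(|A|^{k+1}+\mu(A)^k|A|^2\big),
\]
where $D=A-A$ and $T_D^{(k)}(D)$ counts $k$-tuples in $D$ with all pairwise differences in $D$. The key structural input (Proposition~\refp{intopt}) is an extremal result: $T_D^{(k)}(D)$ is maximised, among symmetric sets of given size in $\C_p$, when $D$ is an interval, yielding $T_D^{(k)}(D)\le 3k\,2^{-k-1}|D|^k$. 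Choosing $k=\lfloor 1/\del\rfloor$ so that $K^k<|A|$ kills the first term and gives $\mu(A)\ge 2K^{-1}|A|\cdot(4k)^{-1/k}$, whence the error $2\del\ln(2/\del)$. The factor $2$ thus comes from $2^{-k}$ in the interval count of $T_D^{(k)}(D)$, not from any concentration of $r_A$; this is the idea your outline is missing.
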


The following integer analog of Theorem~\reft{modp} is, in fact, its
immediate consequence.
\begin{theorem}\label{t:intverD}
Suppose that $A$ is a finite, nonempty set of integers, and that
$\del\in(0,1/3)$. If
  $$ |A-A| \le K|A|,\quad K<|A|^{\del}, $$
then there is an integer $d\ne 0$ such that
$r_A(d)>2K^{-1}|A|(1-2\del\ln(2/\del))$.
\end{theorem}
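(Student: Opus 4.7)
The plan is to deduce Theorem~\reft{intverD} from Theorem~\reft{modp} via a standard Freiman-isomorphism: I reduce $A$ modulo a sufficiently large prime $p$ and verify that both the hypotheses and the conclusion transfer cleanly.

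First I would choose a prime $p$ satisfying $p > 2(\max A - \min A)$ and $p\ge 3|A-A|/2$; such a prime exists by Bertrand's postulate. The first inequality makes reduction modulo $p$ injective both on $A$ and on $A-A$, so that if $\oA\seq\C_p$ denotes the image of $A$, then $|\oA|=|A|$ and $|\oA-\oA|=|A-A|$; moreover, for every nonzero integer $d$ with $|d|<p/2$, the difference-counting functions agree: $r_{\oA}(\o d)=r_A(d)$, where $\o d$ is the residue of $d$ modulo $p$. The second inequality provides the bound $|\oA-\oA|\le 2(p+1)/3$ required by the hypothesis of Theorem~\reft{modp}.

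Since $K<|A|^{\del}=|\oA|^{\del}$ and $|\oA-\oA|\le K|\oA|$, Theorem~\reft{modp} applies to $\oA$ with the same parameters $K$ and $\del$, producing a nonzero $\o d\in\C_p$ with $r_{\oA}(\o d)>2K^{-1}|\oA|(1-2\del\ln(2/\del))$. Taking the integer representative $d\in(-p/2,p/2]$ of $\o d$, which is nonzero, we obtain $r_A(d)=r_{\oA}(\o d)$ by the preservation of the representation counts, and this is exactly the desired lower bound.

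I expect no real obstacle: as the excerpt itself suggests in calling the integer statement an ``immediate consequence'' of the mod-$p$ one, the entire argument is a transparent translation. The only substantive step is choosing $p$ large enough that the Freiman-isomorphism holds while still respecting the size hypothesis $|\oA-\oA|\le 2(p+1)/3$, and both constraints are simultaneously satisfied by any sufficiently large prime.
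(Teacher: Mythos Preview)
Your proposal is correct and is precisely the standard reduction the paper has in mind when it calls Theorem~\reft{intverD} an ``immediate consequence'' of Theorem~\reft{modp}: embed $A$ into $\C_p$ for a large prime $p$ so that differences and representation counts are preserved, and check that the extra hypothesis $|\oA-\oA|\le 2(p+1)/3$ is met. One small caution: in the paper the bar notation $\oA$ is already reserved for the centered interval of the same size (see Section~\refs{intopt}), so when writing this up you should use a different symbol for the mod-$p$ image of $A$.
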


Comparing Theorems~\reft{modp} and~\reft{intverD}, we see that the bounds
obtained are not affected by the ``modulo-$p$ overlapping''. Loosely
speaking, there is no difference between the group $\C_p$ and the group of
integers $\Z$ as far as the second largest value of the function $r_A$ is
concerned.

The following theorem establishes a similar estimate in terms of the
\emph{length} of the set $A$ instead of the size of its difference set.
\begin{theorem}\label{t:intverL} Suppose that $L>1$, and that $A\seq[1,L]$ is a
nonempty set of integers. If
 $|A-A|<|A|^{1+\del}$ with $\del\in(0,1/3)$, then there is an integer
$d\ne 0$ such that $r_A(d)>\frac{|A|^2}{L}\,(1-2\del\ln(2/\del))$.
\end{theorem}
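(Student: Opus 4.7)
My plan is to deduce Theorem~\reft{intverL} directly from Theorem~\reft{intverD}, with essentially no new work beyond one trivial geometric observation. The key point is that a set of integers sitting inside an interval of length $L$ automatically has a ``small'' difference set: if $A\seq[1,L]$, then $A-A\seq[-(L-1),L-1]$, and so $|A-A|\le 2L-1$. This gives two upper bounds on $|A-A|$ simultaneously: the hypothesis $|A-A|<|A|^{1+\del}$, and the geometric bound $|A-A|\le 2L-1$.

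First I would set $K:=|A-A|/|A|$. The hypothesis $|A-A|<|A|^{1+\del}$ is then equivalent to $K<|A|^\del$, which is precisely the form required by Theorem~\reft{intverD}. Applying that theorem yields a nonzero integer $d$ with
\[
r_A(d) > 2K^{-1}|A|\bigl(1-2\del\ln(2/\del)\bigr) = \frac{2|A|^2}{|A-A|}\bigl(1-2\del\ln(2/\del)\bigr).
\]
Next I would invoke the geometric bound $|A-A|\le 2L-1<2L$ to conclude
\[
\frac{2|A|^2}{|A-A|} > \frac{|A|^2}{L},
\]
which, combined with the previous display, yields the desired lower bound for $r_A(d)$.

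I do not foresee any real obstacle here: this is a clean corollary rather than a substantive new theorem. If anything, the step that could in principle fail is the final comparison, which depends crucially on the coefficient $2$ in Theorem~\reft{intverD} being at least $2$. Indeed, exactly a factor of $2$ is lost when passing from $|A-A|$ (the natural quantity from Theorem~\reft{intverD}) to $2L$ (the geometric upper bound). This is presumably why the authors present Theorem~\reft{intverL} as a companion to Theorem~\reft{intverD}: it makes transparent that the sharp constant $2$ in the popularity-gap estimate is precisely what produces the clean length-based form $|A|^2/L$ on the right-hand side.
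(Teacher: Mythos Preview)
Your proposal is correct and matches the paper's own argument essentially verbatim: the paper also sets $K:=|A-A|/|A|$, applies Theorem~\reft{intverD}, and then observes that $2K^{-1}|A|=2|A|^2/|A-A|>|A|^2/L$. The only difference is that you spell out the geometric justification $|A-A|\le 2L-1<2L$, which the paper leaves implicit.
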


We note that the assumption $|A-A|<|A|^{1+\del}$ of Theorem~\reft{intverL}
holds trivially if $A$ is sufficiently dense in $[1,L]$; say,
$|A|>(2L)^{1/(1+\del)}$ suffices.

Theorem~\reft{intverL} follows readily from Theorem~\reft{intverD} by letting
$K:=|A-A|/|A|$ and observing that then $2K^{-1}|A|=2|A|^2/|A-A|>|A|^2/L$.

Apart from the factor $1-2\del\ln(2/\del)$, the lower bounds obtained in
Theorems~\reft{modp}--\reft{intverL} are, essentially, best possible; they
are attained, for example, when $A$ is an interval or an appropriately
sampled random set.

Interestingly, Theorems~\reft{modp}--\reft{intverL} cannot be obtained by a
straightforward averaging as the number of elements $d\in A-A$ with $r_A(d)$
large can have ``zero measure''.
\begin{claim}\label{m:measure0}
For any $\eps\in(0,1)$ there is a finite, nonempty set $A\subset\Z$ such that
\begin{equation*}\label{e:measure0}
  \Big| \Big\{ d \colon
     r_A(d) \ge (1-\eps)\frac{2|A|^2}{|A-A|} \Big\} \Big| \le 2\eps |A-A|.
\end{equation*}
\end{claim}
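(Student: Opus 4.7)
The plan is to let $A:=\{1,2,\dotsc,N\}$ be an interval of integers whose length $N$ will be chosen sufficiently large in terms of $\eps$. This is the natural candidate, since intervals are already noted as essentially extremal for Theorems~\reft{modp}--\reft{intverL}. For such $A$ we have $|A|=N$, $|A-A|=2N-1$, and $r_A(d)=N-|d|$ for every $d\in A-A$, so the threshold in the claim is
\[
  T:=(1-\eps)\,\frac{2|A|^2}{|A-A|}=\frac{2(1-\eps)N^2}{2N-1},
\]
which is slightly smaller than $N$.

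The next step is to translate the inequality $r_A(d)\ge T$ into the geometric condition $|d|\le N-T$. A short computation yields $N-T=N(2\eps N-1)/(2N-1)=:M$. If $M<0$ the set of heavy differences is empty and the claim is trivial, so one may assume $M\ge 0$, in which case the number of integers $d$ satisfying $|d|\le M$ is at most $2\lfloor M\rfloor+1\le 2M+1$.

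To finish, I would verify that
\[
  2M+1=\frac{4\eps N^2-1}{2N-1}\le 2\eps(2N-1)=2\eps|A-A|.
\]
Clearing denominators, this reduces to $4\eps(N-1)^2+(1-2\eps)\ge 0$, which holds for every $N\ge 1$ when $\eps\le 1/2$ and for every $N\ge 2$ when $\eps>1/2$. Thus any $N\ge 2$ yields a valid example for every $\eps\in(0,1)$; taking $N$ large just gives further slack.

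There is no substantial obstacle here; the argument is essentially the three-line computation above. The only mild subtlety is handling the rounding when passing from the real bound $|d|\le M$ to an integer count, which is controlled by $2\lfloor M\rfloor+1\le 2M+1$. In fact the interval produces roughly $\eps|A-A|$ heavy differences, so the factor $2$ in the target bound $2\eps|A-A|$ is comfortably attained and is not tight for this construction.
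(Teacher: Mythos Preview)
Your argument is correct: for $A=[1,N]$ with $N\ge 2$ the count of differences with $r_A(d)\ge(1-\eps)\frac{2|A|^2}{|A-A|}$ is at most $2\eps|A-A|$, exactly as you compute. The only step to watch is the passage from $|d|\le M$ to the integer count $2\lfloor M\rfloor+1$, and you handle it.

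The paper takes a genuinely different route. It sets $A=P+\Lambda$ with $P=[1,n]$ and $\Lambda$ a finite Sidon set chosen so that $P-P$ and $\Lambda-\Lambda$ interact trivially; then $|A|=n|\Lambda|$, $|A-A|=(2n-1)(|\Lambda|^2-|\Lambda|+1)$, and for $d\notin P-P$ one has $r_A(d)=n-|p_1-p_2|$ with the interval part of $d$ uniquely determined. The conclusion then follows by the same triangular-profile count you used, but now inside each Sidon coset. The payoff of the paper's construction is that its examples have doubling $|A-A|/|A|\approx|\Lambda|$, which can be taken arbitrarily large, so the threshold $(1-\eps)\frac{2|A|^2}{|A-A|}\approx n$ sits far below $\max r_A=|A|=n|\Lambda|$; this is precisely the regime in which one might hope an averaging argument could succeed, and the example shows it cannot. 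Your interval, by contrast, has $|A-A|/|A|\approx 2$, so the threshold is essentially the maximum of $r_A$ and the ``zero measure'' phenomenon is less surprising. For the claim as literally stated your argument is both valid and shorter; the paper's construction carries more information relevant to the surrounding discussion.
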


Since the function $r_A\colon d\mapsto|A\cap(A+d)|$ is generalized by the
functions $(d_1\longc d_{k-1})\mapsto|A\cap(A+d_1)\longcap(A+d_{k-1})|$ (see
the next section), it is natural to extend Theorem~\reft{modp} onto the
intersection of $k\ge 3$ translates of the set $A$. In this direction, we
prove the following multidimensional generalization.
\begin{theorem}\label{t:extD}
Suppose that $p$ is a prime, $A\seq\C_p$ is a nonempty subset, and
$\del\in(0,1/(3k))$ with an integer $k\ge 3$. If
  $$ |A-A| \le \min\{K|A|,2(p+1)/3 \},\quad K<|A|^{\del}, $$
then there are pairwise distinct, nonzero elements $d_1\longc d_{k-1}\in\C_p$
such that
  $$ |A\cap(A+d_1)\longcap(A+d_{k-1})|
                           > 2^{k-1}K^{-(k-1)}|A|(1-3\del k^2\ln(1/k\del)). $$
\end{theorem}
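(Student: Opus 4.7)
The plan is to iterate a ``mixed'' popularity-gap estimate $k-1$ times, gaining a factor of $2/K$ at each step. Setting $B_0 := A$, suppose that at step $j$ we have already chosen pairwise distinct nonzero $d_1 \longc d_j \in \C_p$ such that
  $$ B_j := A \cap (A + d_1) \longcap (A + d_j), \qquad
     |B_j| > 2^j K^{-j} |A|(1 - \eta_j), $$
for a cumulative error $\eta_j$. The task is to find $d_{j+1}$ distinct from $0, d_1 \longc d_j$ with $|B_j \cap (A + d_{j+1})| > 2 K^{-1} |B_j|(1 - \eta)$, so that $B_{j+1} := B_j \cap (A + d_{j+1})$ continues the iteration. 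After $k - 1$ steps one arrives at $B_{k - 1} = A \cap (A + d_1) \longcap (A + d_{k-1})$, which is exactly the intersection in the conclusion.

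Observe that $\sum_d |B_j \cap (A + d)| = |A|\,|B_j|$, that the function $d \mapsto |B_j \cap (A+d)|$ is supported on $A - B_j \seq A - A$, and that $|A - A| \le K|A|$; hence the average over the support is $|B_j|/K$, and we are asking for ``twice the average'' at a nonzero $d$. The key technical ingredient is therefore a mixed popularity-gap lemma: under the hypotheses of Theorem~\reft{modp}, for every $B \seq A$ there exists $d \ne 0$ with $|B \cap (A + d)| > 2 K^{-1} |B|(1 - 2\del\ln(2/\del))$. This is an asymmetric analogue of Theorem~\reft{modp}, and it should be obtained by adapting that proof: the ``factor of $2$'' is generated by structural/Fourier information about $A$ and its difference set rather than by the symmetric character of $r_A$, so the same mechanism controls $d \mapsto |B \cap (A+d)|$, which is again supported on $A - A$. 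Crucially, this lemma uses the hypotheses on $A$ only, so we do not accumulate a blowup of the doubling constant as $B$ shrinks -- which is what would happen if we tried to apply Theorem~\reft{modp} directly to $B_j$.

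Applying this lemma at each step and multiplying the bounds gives $|B_{k-1}| > 2^{k-1} K^{-(k-1)} |A|(1 - \eta)^{k-1} \ge 2^{k-1} K^{-(k-1)}|A|(1 - (k-1)\eta)$; absorbing the $k - 2$ forbidden previous choices of $d_i$ into the error (they remove only $O(k)$ points from a support of size up to $K|A|$) and folding everything into a single error of the form $1 - 3\del k^2 \ln(1/(k\del))$ yields the stated bound. The principal obstacle is establishing the mixed popularity-gap lemma with the correct factor of $2$: a naive Cauchy--Schwarz only produces the average bound $|B|/K$, so one must genuinely recast the proof of Theorem~\reft{modp} in the asymmetric setting. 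A secondary task is the error bookkeeping across $k - 1$ iterations, which -- together with the need to keep $K < |A|^{\del}$ usable when we pass to $B_j$ of size roughly $|A|/K^j$ -- is what forces the restriction $\del \in (0, 1/(3k))$.
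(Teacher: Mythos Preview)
Your iterative strategy hinges entirely on the ``mixed popularity-gap lemma'' --- that for every $B\seq A$ there is a nonzero $d$ with $|B\cap(A+d)|>2K^{-1}|B|(1-\eta)$ --- and this is precisely the step you have not established. Your justification that ``the same mechanism controls $d\mapsto|B\cap(A+d)|$'' does not survive inspection. The proof of Theorem~\reft{modp} rests on the chain
\[
  |A|^{2k+2}\ \le\ T_D^{(k)}(D)\cdot\sum_{x_1,\dots,x_k}\big(R_A^{(k+1)}(x_1,\dots,x_k)\big)^2
            \ =\ T_D^{(k)}(D)\cdot\sum_{x}r_A(x)^{k+1},
\]
where the last equality is the duality $\E_{k+1,2}(A)=\E_{2,k+1}(A)$. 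If one replaces $R_A^{(k+1)}$ by $R_{B;A}^{(k+1)}(x_1,\dots,x_k):=|B\cap(A+x_1)\cap\dots\cap(A+x_k)|$, the Cauchy--Schwarz step still goes through (the support is still bounded by $T_D^{(k)}(D)$), but the second factor becomes
\[
  \sum_{x_1,\dots,x_k}\big(R_{B;A}^{(k+1)}\big)^2\ =\ \sum_{d} r_B(d)\,r_A(d)^{k},
\]
which carries information about $\mu(A)$, not about $\max_{d\ne0}|B\cap(A+d)|$. There is no evident asymmetric substitute for the $\E_{k,l}=\E_{l,k}$ identity that would turn this into a moment of $d\mapsto|B\cap(A+d)|$. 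As you yourself note, the only bound available directly on $\sum_d|B\cap(A+d)|^{k+1}$ is H\"older against $|\supp|\le K|A|$, which yields $|B|/K$ without the factor~$2$. So the factor-of-$2$ mechanism, which lives in the $2^{-k}$ of Corollary~\refc{intopt}, is not accessible along this route.

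The paper does not iterate at all. It introduces an auxiliary parameter $l:=\lfl 1/(2\del(k-1))\rfl$ and applies H\"older with exponent $k$ to $\sum_{d_1,\dots,d_l}R_A^{(l+1)}=|A|^{l+1}$, obtaining
\[
  |A|^{k(l+1)}\ \le\ \big(T_D^{(l)}(D)\big)^{k-1}\sum_{d_1,\dots,d_l}\big(R_A^{(l+1)}\big)^k
             \ =\ \big(T_D^{(l)}(D)\big)^{k-1}\sum_{d_1,\dots,d_{k-1}}\big(R_A^{(k)}(d_1,\dots,d_{k-1})\big)^{l+1},
\]
the equality again being $\E_{l+1,k}=\E_{k,l+1}$. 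After discarding the degenerate tuples (some $d_i=0$ or $d_i=d_j$) at negligible cost, the remaining sum is at most $(\mu^{(k)}(A))^{l}|A|^{k}$; substituting Corollary~\refc{intopt} for $T_D^{(l)}(D)$ and taking $l$th roots produces the bound, with the $2^{k-1}$ emerging from $(2^{-l})^{-(k-1)}$ inside the $l$th root. Thus the whole theorem is a single-shot energy argument with the roles of $k$ and $l$ swapped once, not an induction on $k$.
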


From Theorem~\reft{extD} one can easily derive multidimensional analogs of
Theorems~\reft{intverD} and~\reft{intverL}; we omit the details.

Our next result exhibits irregularities in the behaviour of the function
$r_A$ for large subsets $A$ of an \emph{arbitrary} finite abelian group.
\begin{theorem}\label{t:arbG}
Suppose that $A$ is a subset of a finite abelian group $G$ such that $|A|\ge
2^{10}$ and $|A-A|=(1-\eps)|G|$ with $0<\eps\le2^{-5}$. If
$|A-A|\le|A|^{1+\sqrt{\eps}/2}$, then there is a nonzero element $d\in G$
such that $r_A(d) \ge
\frac{|A|^2}{|A-A|}\,\left(1+\frac18\,\sqrt\eps\right)$.
\end{theorem}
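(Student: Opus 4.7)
My plan is to argue by contradiction. Set $N:=|G|$, $D:=A-A$, $B:=G\setminus D$, so $|B|=\eps N$, and write $c:=|A|^2/|D|$ for the average value of $r_A$ on $D$. Assume, contrary to the conclusion, that $r_A(d)<M:=c(1+\sqrt{\eps}/8)$ for every nonzero $d\in G$. Since $\sum_{d\ne 0}r_A(d)=|A|^2-|A|$, this upper-bounds the additive energy
\begin{equation*}
 E(A):=\sum_{d\in G} r_A(d)^2 = |A|^2+\sum_{d\ne 0}r_A(d)^2 \le |A|^2+M\bigl(|A|^2-|A|\bigr),
\end{equation*}
so it suffices to establish a matching lower bound $E(A)\ge (|A|^4/|D|)\bigl(1+\Omega(\sqrt{\eps})\bigr)$.

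The starting point for the lower bound is the identity $r_A\cdot 1_B\equiv 0$, which after applying Parseval becomes
\begin{equation*}
 \sum_{\chi\ne 1}|\widehat{1_A}(\chi)|^2\,\widehat{1_D}(\chi) = |A|^2|B|,
\end{equation*}
with $\widehat{1_D}$ real because $D=-D$. A direct Cauchy--Schwarz at this stage only reproduces the trivial bound $E(A)\ge |A|^4/|D|$; instead, I decompose $|\widehat{1_A}(\chi)|^2 = c\,\widehat{1_D}(\chi)+f(\chi)$ on $\chi\ne 1$ with $\sum_{\chi\ne 1} f(\chi)\widehat{1_D}(\chi)=0$, and Parseval applied to $r_A-c\cdot 1_D$ then yields $E(A)-|A|^4/|D|=N^{-1}\|f\|_2^2 \ge (|A|-c)^2$. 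This is a first improvement, driven by the observation that $r_A(0)=|A|$ necessarily differs from~$c$.

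To upgrade $(|A|-c)^2$ to the required relative improvement of order $\sqrt{\eps}$, I bring in the second hypothesis $|A-A|\le |A|^{1+\sqrt{\eps}/2}$. By Pl\"unnecke--Ruzsa this bounds $|kA-\ell A|$ for small $k,\ell$, and after Fourier inversion controls the sum $\sum_\chi|\widehat{1_A}(\chi)|^4=N\,E(A)$ and the distribution of the large spectrum of $1_A$. Combining this Fourier control with the identity of the previous paragraph through a weighted Cauchy--Schwarz or H\"older inequality---arranged so as to be far from tight when $|\widehat{1_A}|^2$ fails to be proportional to $\widehat{1_D}$---one obtains the sharpened bound $\|f\|_2^2\ge\Omega\bigl(\sqrt{\eps}\,|A|^4|B|/|D|\bigr)$, which after substitution into the formula for $E(A)$ contradicts the assumed upper bound.

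The crux of the argument is this last step: extracting a $\sqrt{\eps}$ gain, rather than the linear $\eps$ gain that the Fourier identity alone would yield, requires using the Pl\"unnecke bound essentially and the assumption $|A|\ge 2^{10}$ to guarantee that lower-order terms---in particular the defects $r_A(0)-c$ and $M-c$---remain dominated by the main contribution. The principal difficulty lies in the interplay between the two hypotheses: the density condition $|A-A|=(1-\eps)|G|$ by itself is too weak to break past Cauchy--Schwarz, while the small-doubling bound $|A-A|\le|A|^{1+\sqrt{\eps}/2}$ supplies just enough Fourier control to do so.
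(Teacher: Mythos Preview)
Your Fourier setup is correct: the identity $\sum_{\chi\ne1}|\widehat{1_A}(\chi)|^2\widehat{1_D}(\chi)=|A|^2|B|$, the decomposition $|\widehat{1_A}|^2=c\,\widehat{1_D}+f$, and the resulting equality $E(A)=|A|^4/|D|+N^{-1}\|f\|_2^2$ are all fine. But the step you yourself call ``the crux'' is not a proof, it is a hope. You assert that a ``weighted Cauchy--Schwarz or H\"older'' combined with Pl\"unnecke control of the large spectrum yields $\|f\|_2^2\ge\Omega(\sqrt{\eps}\,|A|^4|B|/|D|)$, but you specify neither the weights, nor the exponents, nor how Pl\"unnecke enters. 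Worse, the stated target is quantitatively off: plugging it back gives only $E(A)\ge(|A|^4/|D|)(1+\Omega(\sqrt{\eps}\,|B|/N))=(|A|^4/|D|)(1+\Omega(\eps^{3/2}))$, far short of the $(1+\Omega(\sqrt{\eps}))$ you need for the contradiction. So even as written the numerics do not close, and the mechanism that would produce a genuine $\sqrt{\eps}$ gain from second-moment data alone is nowhere indicated.

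The paper takes a completely different route, and the comparison explains where the $\sqrt{\eps}$ really comes from. Instead of the additive energy $E(A)=\E_2(A)$, it works with the higher energy $\E_{k+1}(A)=\sum_x r_A(x)^{k+1}$ for a parameter $k\approx 1/\sqrt{\eps}$. Cauchy--Schwarz from below gives $|A|^{2k+2}\le T_D^{(k)}(D)\,\E_{k+1}(A)$ with $D=A-A$, while from above $\E_{k+1}(A)\le|A|^{k+1}+\mu(A)^k|A|^2$. The key combinatorial input is an estimate $T_D^{(k)}(D)\le(1-\tfrac14k(k-1)\tau)|D|^k$ valid when $\tau:=|G\setminus D|/|D|\le 2/(k^2+k+2)$; this is proved by an inclusion--exclusion induction on $k$, not by Fourier or Pl\"unnecke. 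Choosing $k=\lceil1/\sqrt{\tau}\rceil+1$ saturates this constraint, makes the factor in front of $|D|^k$ at most $3/4$, and the small-doubling hypothesis $|D|\le|A|^{1+\sqrt{\eps}/2}$ together with $|A|\ge2^{10}$ kills the $|A|^{k+1}$ term. Taking $k$th roots then turns the constant gain $4/3$ into $(4/3)^{1/k}\ge1+\tfrac18\sqrt{\eps}$. The appearance of $\sqrt{\eps}$ is thus an artifact of balancing $k$ against the range of validity of the $T_D^{(k)}(D)$ bound; it does not emerge from any second-moment or spectral argument, and your outline gives no substitute for this balancing.
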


Finally, we prove a continuous analog of Theorem~\reft{intverL}. For a
function $f\in L^1(\R)$, let
  $$ (f\circ f)(x):=\int_{\R} f(t)f(x+t)\,dt. $$
\begin{theorem}\label{t:fan}
Suppose that $f$ is a real, nonnegative function with $\supp(f)\seq[0,1]$. If
$f$ is not constant on its support then, letting $\rho:=\|f\|_2/\|f\|_1$, for
any $0<\del\le\min\{\frac1{2\rho^{12}},\frac1{2^8}\}$ we have
\begin{equation*}\label{f:f_sup}
  \sup_{x \in [-1,1] \stm [-\del,\del]}\, \frac{(f\circ f)(x)}{\|f\|_1^2}
    \ge 1 - 8\max\Big\{ \sqrt[8]{2\del},\,
                     \frac{\ln\log_\rho(1/2\del)}{\log_\rho(1/2\del)} \Big\}.
\end{equation*}
\end{theorem}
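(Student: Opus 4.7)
The plan is to reduce Theorem~\reft{fan} to its integer analog, Theorem~\reft{intverL}, via a discretization argument, and then to separately bound the behavior of $f\circ f$ near zero so as to enforce $|x|>\delta$ in the final supremum.

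First I would normalize $\|f\|_1=1$ and approximate $f$ by a step function: partition $[0,1]$ into equal intervals $I_k=[(k-1)/N,k/N]$ for $k=1,\dotsc,N$ with $N$ large (to be calibrated in terms of $\delta$ and $\rho$), and replace $f$ by $\tilde f(x):=Nc_k$ on $I_k$, where $c_k:=\int_{I_k}f$. One has $\|\tilde f\|_1=\|f\|_1$, $\|\tilde f\|_2\le\|f\|_2$, and $\tilde f\circ\tilde f\to f\circ f$ uniformly on $[-1,1]$ as $N\to\infty$, so it suffices to prove the theorem for $\tilde f$.

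Next I would lift $\tilde f$ to an integer set. For a very large auxiliary $M$, put $A=\bigcup_k A_k$ with $A_k\subseteq\{(k-1)M+1,\dotsc,kM\}$ of cardinality $\lfloor c_kM/c^*\rfloor$ (where $c^*:=\max_k c_k$), the elements of $A_k$ chosen either as a random subset of the block or as an evenly spaced subset so that $|A_k\cap(A_{k+j}-jM)|\approx|A_k|\,|A_{k+j}|/M$ for $0<|j|<N$. A direct computation then gives
\begin{equation*}
   r_A(jM)\ \approx\ \frac{M}{N(c^*)^2}\,(\tilde f\circ\tilde f)(j/N),
\qquad
   \frac{|A|^2}{NM}\ \approx\ \frac{M}{N(c^*)^2}\,\|f\|_1^2,
\end{equation*}
so that $r_A(jM)/(|A|^2/(NM))\to(\tilde f\circ\tilde f)(j/N)/\|f\|_1^2$ as $M\to\infty$. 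Choosing $\delta'$ of order $\sqrt[8]{2\delta}$, the hypothesis $|A-A|\le 2NM<|A|^{1+\delta'}$ holds for $M$ sufficiently large, so Theorem~\reft{intverL} applied to $A\subseteq[1,NM]$ produces some nonzero $d$ with $r_A(d)>(|A|^2/(NM))\bigl(1-2\delta'\ln(2/\delta')\bigr)$. Tracing back, this yields $x=j/N\neq 0$ with $(f\circ f)(x)\ge\|f\|_1^2\bigl(1-8\sqrt[8]{2\delta}\bigr)$, up to errors vanishing as $N,M\to\infty$.

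The principal obstacle is that Theorem~\reft{intverL} only asserts the existence of some nonzero $d$, and the extracted $x=j/N$ could still lie in $[-\delta,\delta]$. To force $|x|>\delta$, I would argue by contradiction: if the supremum were effectively attained inside $[-\delta,\delta]$, then Plancherel's identity $\int(f\circ f)^2=\int|\hat f|^4$ together with the pointwise bound $(f\circ f)(x)\le(f\circ f)(0)=\rho^2\|f\|_1^2$ would yield a lower bound for the $L^2$ mass of $f\circ f$ outside $[-\delta,\delta]$, hence for the sup there. The $\rho$-dependent threshold $\delta\le 1/(2\rho^{12})$ and the alternative error term $\ln\log_\rho(1/2\delta)/\log_\rho(1/2\delta)$ should arise from this balance, with the exponent $12$ emerging from a dyadic pigeonholing over frequency scales of $\hat f$. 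Propagating all three error sources --- step-function approximation, integer lift, and near-zero exclusion --- through the calibration of $N$, $M$, and $\delta'$ while preserving the precise displayed form of the bound is the most delicate part of the argument.
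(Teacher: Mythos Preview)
Your proposal has a genuine gap exactly where you flag the ``principal obstacle'': nothing in the discretization-plus-Theorem~\reft{intverL} route prevents the extremal difference $d$ from corresponding to $x$ with $|x|<1/N$, and the Plancherel fix you sketch does not close this. The identity $\int(f\circ f)^2=\int|\hat f|^4$ gives only an $L^2$ average of $f\circ f$, from which a \emph{pointwise} lower bound at a specified distance from the origin cannot be extracted without a further idea; ``dyadic pigeonholing over frequency scales of $\hat f$'' is not that idea, and no mechanism is offered for why the exponent $12$ or the term $\ln\log_\rho(1/2\delta)/\log_\rho(1/2\delta)$ would appear. More fundamentally, Theorem~\reft{intverL} used as a black box carries a single parameter $\delta'$ and knows nothing about $\rho$; the $\rho$-dependent error term encodes a trade-off that this reduction cannot see. (Even the first error term does not come out: with $\delta'\asymp\sqrt[8]{2\delta}$ the bound $2\delta'\ln(2/\delta')$ carries an extraneous logarithm.)

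The paper's argument avoids discretization entirely and works with the continuous analog of the key inequality~\eqref{e:basic}: for $D=[-1,1]$ and any $k\ge1$,
\[
  \|f\|_1^{2k+2}\ \le\ T_D^{(k)}(D)\int_{-1}^{1}(f\circ f)^{k+1}(x)\,dx,
  \qquad T_D^{(k)}(D)=k+1.
\]
One then splits the integral as $\int_{|x|\le\delta}\le 2\delta\|f\|_2^{2k+2}=2\delta\rho^{2k+2}\|f\|_1^{2k+2}$ and $\int_{|x|>\delta}\le\omega^k\|f\|_1^{2k+2}$ (with $\omega$ the normalized supremum in question), obtaining $\omega^k\ge\frac{1}{k+1}-2\delta\rho^{2k+2}$. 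Choosing $k$ of order $\frac12\log_\rho(1/2\delta)$ balances the two terms and yields the second error bound directly; the first error term comes from a separate, elementary Cauchy--Schwarz estimate $\omega\ge 1-\rho\sqrt{\rho^2-1}$ that handles the regime where $\rho$ is close to $1$. The optimization over $k$ --- carried out \emph{for the function $f$ itself}, not for an auxiliary set --- is the missing ingredient in your plan.
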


We note that the matching upper bound $\|f\circ f\|_\infty\le\|f\|_1^2$ is
trivial.

The quantity $\log_\rho(1/2\del)$ characterizes the length of the forbidden
interval $[-\del,\del]$ as measured against the ``scatter'' of $f$. If
$\log_\rho(1/2\del)$ is small enough, then we can have
$\supp(f)\seq[0,\del]$; in this case $\supp(f\circ f)\seq[-\del,\del]$ and
there do not exist $x\notin[-\del,\del]$ with $(f\circ f) (x)>0$.
Theorem~\reft{fan} assumes $\log_\rho(1/2\del)\ge 12$, and the remainder term
is $o(1)$ in the regime where $\log_\rho(1/2\del)\to\infty$ and $\del\to 0$.

Our argument involves two major components: the higher energies
technique~\refb{ss} and a result in the spirit of~\refb{l} establishing an
extremal property of the interval subsets of prime-order groups. The central
role is played by the following quantity.

For finite subsets $A$ and $D$ of an abelian group and integer $k\ge 1$, let
$T_D^{(k)}(A)$ be the number of $k$-tuples $(a_1\longc a_k)\in A^k$ such that
$a_i-a_j\in D$ for any $1\le i,j\le k$. In particular, $T_D^{(k)}(D)$ is the
number of $k$-tuples $(d_1\longc d_k)$ such that
  $$ \{0,d_1\longc d_k\}-\{0,d_1\longc d_k\}\seq D. $$
The key result we prove about this quantity shows that in a group of prime
order, if $0\in D=(-D)$, then $T_D^{(k)}(A)$ can only get larger if $A$ and
$D$ are replaced with the intervals $\oA,\oD$ of sizes $|\oA|=|A|$ and
$|\oD|=|D|$, respectively, centered at $0$; that is, either $\oA=[-m,m]$, or
$\oA=[-(m-1),m]$ with $m=\lfloor|A|/2\rfloor$ in both cases, and similarly
for $\oD$.
\begin{proposition}\label{p:intopt}
For any prime $p\ge 5$, integer $k\ge 1$, and nonempty subsets $A,D\seq\C_p$
with $0\in D=(-D)$, we have
  $$ T_D^{(k)}(A) \le T_\oD^{(k)}(\oA), $$
where $\oA,\oD\seq\C_p$ are intervals with $|\oA|=|A|$ and $|\oD|=|D|$,
centered at $0$.
\end{proposition}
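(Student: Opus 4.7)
My plan is to proceed by induction on $k$, recasting $T_D^{(k)}(A)$ as the multilinear expression
\[
T_D^{(k)}(A)=\sum_{a_1,\dots,a_k\in\C_p}\,\prod_{i=1}^k\mathbf{1}_A(a_i)\,\prod_{1\le i<j\le k}\mathbf{1}_D(a_i-a_j),
\]
and showing that, among all $(A,D)$ of the prescribed sizes with $0\in D=-D$, this quantity is maximized when both sets are replaced by the centered intervals $\oA,\oD$. For $k=1$ both sides equal $|A|$. For $k=2$ the claim reads
\[
\sum_{x,y\in\C_p}\mathbf{1}_A(x)\mathbf{1}_A(y)\mathbf{1}_D(x-y)\le\sum_{x,y\in\C_p}\mathbf{1}_{\oA}(x)\mathbf{1}_{\oA}(y)\mathbf{1}_{\oD}(x-y),
\]
which is a Riesz--Sobolev type rearrangement inequality on $\C_p$. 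I would derive it by iterated two-point compressions: whenever $A$ fails to be a centered interval one finds $a\in A$ and $a'\notin A$ with $a'$ strictly closer to $0$ in cyclic distance so that the swap $A\mapsto(A\stm\{a\})\cup\{a'\}$ does not decrease the trilinear sum, and analogously for $D$.

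For the inductive step with $k\ge 3$ I would use the recursion
\[
T_D^{(k)}(A)=\sum_{a\in A}T_D^{(k-1)}\bigl(A\cap(a+D)\bigr),
\]
obtained by fixing the last coordinate. Applying the induction hypothesis to each intersection $A\cap(a+D)$ and invoking the translation-invariance of $T_D^{(k-1)}$ gives
\[
T_D^{(k)}(A)\le\sum_{a\in A}\psi\bigl(|A\cap(a+D)|\bigr),\qquad \psi(s):=T_{\oD}^{(k-1)}(I_s),
\]
where $I_s\seq\C_p$ denotes any interval of cardinality $s$. The companion identity $T_{\oD}^{(k)}(\oA)=\sum_{\bar a\in\oA}\psi\bigl(|\oA\cap(\bar a+\oD)|\bigr)$ then reduces the proposition to
\[
\sum_{a\in A}\psi\bigl(|A\cap(a+D)|\bigr)\le\sum_{\bar a\in\oA}\psi\bigl(|\oA\cap(\bar a+\oD)|\bigr).
\]

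The main obstacle is this last comparison. My intention is to combine (i)~monotonicity and convexity of $\psi$, which I expect to extract from an explicit piecewise-linear expansion of $T^{(k-1)}_{\oD}$ on intervals, with (ii)~a weak majorization claim: the multiset $\bigl(|\oA\cap(\bar a+\oD)|\bigr)_{\bar a\in\oA}$ dominates $\bigl(|A\cap(a+D)|\bigr)_{a\in A}$ in the sense that its top-$t$ partial sums are at least as large for every~$t$. Step~(ii) is itself a rearrangement statement which I would try to reduce to the $k=2$ base case by applying it to the restrictions of $A$ to its ``most intersection-heavy'' subsets; carrying this reduction through while correctly handling the cyclic wrap-around of $\C_p$ — for which the hypothesis $p\ge 5$ precisely excludes the degenerate configurations possible in $\C_2$ and $\C_3$ — is the genuine technical hurdle.
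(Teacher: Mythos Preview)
Your architecture is exactly that of the paper: induction on $k$, the recursion $T_D^{(k)}(A)=\sum_{a\in A}T_D^{(k-1)}(A\cap(a+D))$, the induction hypothesis to reduce to $\sum_{a\in A}\psi(|A\cap(a+D)|)$ with $\psi(s)=T_{\oD}^{(k-1)}(I_s)$, and then convexity of $\psi$ together with majorization of the intersection multiset, concluded by Karamata. Two remarks on the pieces you flag as open. First, $\psi$ is \emph{not} piecewise-linear for $k\ge 3$ (already $\psi(s)=s^{k-1}$ when $s\le(|\oD|+1)/2$); the paper instead writes $\psi(n)=T_{\oD}^{(k-1)}(J_n)=\sum R_{J_n}^{(k-1)}(d_1,\dots,d_{k-2})$ via identity~\refe{Ax1} and proves each summand is a convex sequence in $n$ by a short combinatorial argument (Lemma~\refl{conv}). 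Second, the weak majorization you need---that for every $t$ the sum of the $t$ largest values of $|A\cap(D+a)|$, $a\in A$, is bounded by the corresponding sum for $|\oA\cap(\oD+\bar a)|$, $\bar a\in\oA$---is precisely the three-set rearrangement result quoted from~\refb{l}; your instinct that it is a Riesz--Sobolev type statement on $\C_p$ is correct, but it does not follow formally from your $k=2$ base case (which only compares $A,A,D$ with $\oA,\oA,\oD$, whereas here one needs $S,A,D$ versus $\o S,\oA,\oD$ for arbitrary $S\subseteq A$).
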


As a consequence of Proposition~\refp{intopt} we prove the following,
slightly technical, corollary.
\begin{corollary}\label{c:intopt}
For any prime $p\ge 5$, integer $k\ge 3$, and subset $D\seq\C_p$ with $0\in
D=(-D)$ and $k\le |D|\le\frac23(p+1)$, we have
  $$ T_D^{(k)}(D) \le 3k2^{-k-1}|D|^k. $$
\end{corollary}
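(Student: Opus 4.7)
The plan is to use Proposition \refp{intopt} to reduce to an exact count on intervals and then bound the resulting polynomial expression. Applying the proposition with $A = D$ gives $T_D^{(k)}(D) \le T_\oD^{(k)}(\oD)$, where $\oD \seq \C_p$ is the centered interval of the same size as $D$. Since $0 \in D = -D$ in a group of odd prime order, the set $D \stm \{0\}$ splits into pairs $\{d,-d\}$ with $d \ne -d$, so $|D|$ is odd. Writing $|D| = 2m+1$, we have $\oD = \{-m,\dotsc,m\}$; the hypothesis $|D| \le 2(p+1)/3$ forces $2m < p$, so no pairwise difference of elements of $\oD$ wraps around modulo $p$, and $T_\oD^{(k)}(\oD)$ may be computed inside $\Z$.

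Next I would compute $T_\oD^{(k)}(\oD)$ in closed form. A tuple $(a_1,\dotsc,a_k) \in \oD^k$ has all pairwise differences in $\oD$ precisely when $\max_i a_i - \min_i a_i \le m$. Partitioning by the pair $(s,t) = (\min,\max)$, counting tuples in $[s,t]^k$ that attain both extremes by inclusion--exclusion (which yields $(t-s+1)^k - 2(t-s)^k + (t-s-1)^k$ for $s < t$ and $1$ for $s = t$), grouping by the gap $r = t - s$, and performing one summation by parts in $r$, I expect to reach the clean identity
\[
T_\oD^{(k)}(\oD) = (m+1)^{k+1} - m^{k+1}.
\]
The corollary will then reduce to the numerical claim $(m+1)^{k+1} - m^{k+1} \le (3k/2^{k+1})(2m+1)^k$ for all $m$ with $2m+1 \ge k \ge 3$.

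Setting $y = m + \tfrac12$ and expanding via the binomial theorem (only odd exponents survive the subtraction),
\[
\frac{(m+1)^{k+1} - m^{k+1}}{y^k} = (k+1) + \sum_{i \ge 1} \binom{k+1}{2i+1} |D|^{-2i}.
\]
Since $(3k/2^{k+1})(2m+1)^k = (3k/2) y^k$, the target inequality is equivalent to the tail being at most $(k-2)/2$. The ratio of consecutive tail terms,
\[
\frac{\binom{k+1}{2i+3}}{\binom{k+1}{2i+1}} \cdot |D|^{-2} = \frac{(k-2i)(k-2i-1)}{(2i+2)(2i+3)\,|D|^2},
\]
is at most $1/20$ for every $i \ge 1$, because $(2i+2)(2i+3) \ge 20$ and $(k-2i)(k-2i-1) \le k^2 \le |D|^2$. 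A geometric estimate then bounds the tail by $(20/19)\binom{k+1}{3}/|D|^2 \le 10(k^2-1)/(57k)$, and the remaining inequality $10(k^2-1)/(57k) \le (k-2)/2$ rearranges to $37k^2 - 114k + 20 \ge 0$, which is elementary for $k \ge 3$.

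The main obstacle will be the tightness of the corner case $k = 3$, $|D| = 3$, where $T_\oD^{(3)}(\oD) = 15$ falls only about $1\%$ below the target $243/16$: any slack in the estimates has to be spent very carefully here. Crude one-step bounds such as $(m+1)^{k+1} - m^{k+1} \le (k+1)(m+1)^k$ waste this margin immediately and are sharp only once $|D|$ is comfortably larger than $k$, so retaining the full series expansion (or equivalently a precise midpoint-type identity for $(m+1)^{k+1} - m^{k+1}$) appears necessary to handle every admissible $(k, |D|)$ uniformly.
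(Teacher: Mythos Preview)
Your proof is correct and follows the same route as the paper: reduce to the centered interval via Proposition~\refp{intopt}, obtain the closed form $T_{\oD}^{(k)}(\oD)=(m+1)^{k+1}-m^{k+1}$ (this is the paper's Lemma~\refl{Int}, which you rederive by a min/max partition instead of the paper's union-of-cubes argument), and then check the inequality $(m+1)^{k+1}-m^{k+1}\le 3k\,2^{-k-1}(2m+1)^k$, for which the paper only says ``elementary calculus'' while you give a full verification. One small slip: the condition $2m<p$ is not by itself enough to transfer the count to $\Z$ (e.g.\ $p=7$, $m=3$); what is actually needed---and what the hypothesis $|D|\le\frac23(p+1)$ does give---is $3m<p$, as in Lemma~\refl{Int}.
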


We now turn to the proofs of the results discussed above. In the next section
we introduce the basic notation, collect auxiliary tools needed for the
proofs, and prove Claim~\refm{measure0}. In Section~\refs{intopt} we prove
the key Proposition~\refp{intopt}. Theorem~\reft{modp} is derived from
Proposition~\refp{intopt} in Section~\refs{modp} where also
Corollary~\refc{intopt} is proved; as we have noticed,
Theorems~\reft{intverD} and~\reft{intverL} follow from Theorem~\reft{modp}
and therefore do not require any additional consideration. Theorems~
\reft{arbG} and \reft{extD} are proved in Sections~\refs{arbG} and
\refs{extD}, respectively. A proof of Theorem~\reft{fan}, along with a
somewhat broader context for the problems studied in this paper, is outlined
in the concluding Section~\refs{fan}.

\section{Notation and preliminaries}\label{s:prelim}

In this section $A$ and $D$ are finite, nonempty subsets of an abelian group
$G$.

Following~\refb{ss} (but using a different notation), for an integer $k\ge 2$
we let
\begin{equation}\label{e:Rdef}
  R^{(k)}_A(x_1\longc x_{k-1}) := |A\cap(A+x_1)\longcap(A+x_{k-1})|,
                                          \quad x_1\longc x_{k-1}\in G.
\end{equation}
This quantity generalizes the function $r_A$ which is obtained as a
particular case $k=2$. Alternatively, $R^{(k)}_A(x_1\longc x_{k-1})$ is the
number of representations of $(x_1\longc x_{k-1})$ as a difference of an
element of the ``diagonal'' $\{(a\longc a)\colon a\in A\}$ and an element of
the Cartesian power $A^{k-1}$. Clearly,
\begin{equation}\label{e:sumC}
  \sum_{x_1\longc x_{k-1}\in G} R^{(k)}_A(x_1\longc x_{k-1}) = |A|^k.
\end{equation}
Less obvious is the identity
\begin{equation}\label{e:Ckl}
  \sum_{x_1\longc x_{k-1}\in G}\big(R^{(k)}_A(x_1\longc x_{k-1})\big)^l
        = \sum_{y_1\longc y_{l-1}\in G}
                              \big(R^{(l)}_A(y_1\longc y_{l-1})\big)^k
\end{equation}
valid for any $k,l\ge 2$ and any finite subset $A\seq G$,
see~\cite[Lemma~2.8]{b:sv}.

The common value of the two sums in \refe{Ckl} is denoted $\E_{k,l}(A)$;
thus, $\E_{k,l}(A)=\E_{l,k}(A)$. We write
  $$ \E_k(A) := \E_{k,2}(A)=\sum_{x\in G}|A\cap(A+x)|^k. $$

Let $\mu(A):=\max\{r_A(d)\colon d\in A-A,\ d\ne 0\}$; that is, $\mu(A)$ is
the second largest value attained by the function $r_A$. We have
\begin{equation}\label{e:Emu}
  \E_k(A) = \sum_{x\in G}|A\cap(A+x)|^k \le |A|^k + (\mu(A))^{k-1} |A|^2.
\end{equation}

Recall that in Section~\refs{intro} we have defined
  $$ T_D^{(k)}(A) := |\{(a_1\longc a_k)\in A^k
                  \colon a_i-a_j\in D,\ 1\le i,j\le k \}|,\quad k\ge 1. $$
Thus, for instance, if $0\in D$, then $T_D^{(1)}(A)=|A|$, and if $A-A\seq D$,
then $T_D^{(k)}(A)=|A|^k$. Furthermore, $T_D^{(k)}(A+g)=T_D^{(k)}(A)$ for any
group element $g$, and $T_D^{(2)}(A)$ is the total number of representations
of the elements $d\in D$ as $d=a_1-a_2$ with $a_1,a_2\in A$. The relevance of
this quantity in our context is explained by the observation that if $D=A-A$
and $(a_1\longc a_{k-1})\in\supp(R_A^{(k)})$, then $a_i\in D$ and $a_i-a_j\in
D$ for all $1\le i,j\le k-1$; as a result,
\begin{equation}\label{e:nfc}
  |\supp(R_A^{(k)})| \le T_D^{(k-1)}(D).
\end{equation}
Combining identities~\refe{sumC} and~\refe{Ckl} and estimates \refe{nfc} and
\refe{Emu}, and using the Cauchy-Schwarz inequality, we get
\begin{align}
  |A|^{2k+2}
    &=  \big( \sum_{x_1\longc x_{k}\in G}
                            R^{(k+1)}_A(x_1\longc x_{k}) \big)^2 \notag \\
    &\le |\supp(R^{(k+1)}_A)| \cdot \sum_{x_1\longc x_{k}\in G}
                       \big(R^{(k+1)}_A(x_1\longc x_{k})\big)^2 \notag \\
    &=   |\supp(R^{(k+1)}_A)|
                        \cdot \sum_{x\in D} (R_A^{(2)}(x))^{k+1} \notag \\
    &=   |\supp(R^{(k+1)}_A)| \cdot \E_{k+1}(A)\notag \\
    &\le T_D^{(k)}(D) \cdot (|A|^{k+1} + (\mu(A))^{k} |A|^2). \label{e:basic}
\end{align}
The resulting inequality is, in fact, our main path in this paper to
estimating $\mu(A)$.

We close this section with the proof of Claim~\refm{measure0} from
Section~\refs{intro}.

\begin{proof}[Proof of Claim~\refm{measure0}]
Fix an integer $n\ge\frac1\eps+2$, write $P:=[1,n]$, and let $\Lam\seq\Z$ be
a finite Sidon set (see, for instance,~\refb{o}) satisfying
$(2P-2P)\cap(2\Lam-2\Lam)=\{0\}$ and $|\Lam|>\sqrt{2/\eps}+1$. Consider the
sumset $A:=P+\Lam$. We have $|A|=|\Lam|n$ and
$|A-A|=(2n-1)(|\Lam|^2-|\Lam|+1)$. Any element $d\in A-A$ can be represented
as $d=\lam_1-\lam_2+p_1-p_2$ with $\lam_1,\lam_2\in\Lam$ and $p_1,p_2\in P$;
moreover, if $d\notin P-P$, then $\lam_1$ and $\lam_2$ are uniquely
determined by $d$, and $r_A(d)=n-|p_1-p_2|$. Therefore if
$r_A(d)>(1-\eps)\,2|A|^2/|A-A|$, then
  $$ n-|p_1-p_2| > (1-\eps)\frac{2n^2|\Lam|^2}{(2n-1)(|\Lam|^2-|\Lam|+1)}
                                                             >(1-\eps)n $$
implying $|p_1-p_2|<\eps n$. This shows that there are at most as many as
$(|\Lam|^2-|\Lam|)(2\eps n+1)$ elements $d\notin P-P$ with
$r_A(d)>(1-\eps)\,2|A|^2/|A-A|$. Since $|P-P|=2n-1$, we get at most
\begin{align*}
  (|\Lam|^2 &- |\Lam|)(2\eps n+1) + 2n-1 \\
         &=\eps(2n-1)(|\Lam|^2-|\Lam|) + (\eps+1)(|\Lam|^2-|\Lam|)
                   + 2n-1 \\
         &= \eps(2n-1)(|\Lam|^2-|\Lam|+1) + (\eps+1)(|\Lam|^2-|\Lam|+1)
                 + (2n(1-\eps) -2)
\end{align*}
elements totally. Finally, we notice that the first summand in the right-hand
side is $\eps|A-A|$ while, in view of the assumptions $n\ge\frac1\eps+2$ and
$|\Lam|>\sqrt{2/\eps}+1$, the last two summands can be estimated as
  $$ (\eps+1)(|\Lam|^2-|\Lam|+1)
        < \frac12\,\eps\,(2n-1)(|\Lam|^2-|\Lam|+1) = \frac12\eps\,|A-A| $$
and
  $$ 2n(1-\eps) -2 < 2n-1 < \frac12\,\eps\,(2n-1)(|\Lam|^2-|\Lam|+1)
                                               = \frac12\eps\,|A-A|. $$
\end{proof}

\section{Proof of Proposition~\refp{intopt}}\label{s:intopt}

We start with a lemma establishing two simple identities that the quantities
$T_D^{(k)}(A)$ satisfy.
\begin{lemma}\label{e:id}
For any integer $k\ge 1$ and any finite subsets $A,D$ of an abelian group
with $0\in D=(-D)$ we have
\begin{align}
  T_D^{(k+1)}(A)
    &= \sum_{\sub{d_1\longc d_k\in D \\ d_i-d_j\in D}}
             R_A^{(k+1)}(d_1\longc d_k) \label{e:Ax1}
  \intertext{(with the indices $i$ and $j$ running independently over all
values in $\{1\longc k\}$), and}
    T_D^{(k+1)}(A) &= \sum_{a\in A} T_D^{(k)}(A\cap(D+a)) \label{e:Ax2}.
\end{align}
\end{lemma}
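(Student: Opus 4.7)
The plan is to prove both identities by direct bijective counting of the tuples defining $T_D^{(k+1)}(A)$, using the hypothesis $0\in D=(-D)$ precisely to collapse mirror constraints of the form $a_i-a_j\in D$ and $a_j-a_i\in D$ into a single one.

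For identity \refe{Ax1}, I would decompose a tuple $(a_0,a_1\longc a_k)\in A^{k+1}$ contributing to $T_D^{(k+1)}(A)$ by singling out the coordinate $a_0=:y$ and introducing $d_i:=y-a_i$ for $1\le i\le k$. The constraint $a_0-a_i\in D$ translates to $d_i\in D$; the mirror constraint $a_i-a_0\in D$ is then automatic from $D=(-D)$; and the cross-constraints $a_i-a_j\in D$ become $d_j-d_i\in D$, equivalent (again by $D=(-D)$) to $d_i-d_j\in D$. With $(d_1\longc d_k)$ fixed subject to these conditions, the admissible $y$ range exactly over $A\cap(A+d_1)\longcap(A+d_k)$, whose cardinality is $R_A^{(k+1)}(d_1\longc d_k)$ by definition. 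Summing over the valid $(d_1\longc d_k)$ yields \refe{Ax1}.

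For identity \refe{Ax2}, I would instead distinguish the last coordinate $a_{k+1}=:a\in A$. The constraints $a_i-a_{k+1}\in D$ ($1\le i\le k$) force each $a_i\in A\cap(D+a)$, while their mirror constraints $a_{k+1}-a_i\in D$ are again automatic from $D=(-D)$. The remaining conditions $a_i-a_j\in D$ for $1\le i,j\le k$ are precisely those appearing in the definition of $T_D^{(k)}(A\cap(D+a))$. Summing over $a\in A$ delivers \refe{Ax2}.

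I do not anticipate any genuine obstacle: both identities are essentially bookkeeping, and the only nontrivial observation is that the symmetry $0\in D=(-D)$ lets one replace each two-sided constraint $\{a_i-a_j\in D,\,a_j-a_i\in D\}$ by a single one, so the conditions in the sums on the right exactly match those hidden in $T_D^{(k+1)}(A)$.
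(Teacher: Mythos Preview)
Your proposal is correct and follows essentially the same approach as the paper: both identities are obtained by singling out one coordinate of a tuple in $T_D^{(k+1)}(A)$ (the paper uses $a_{k+1}$ in both cases, you use $a_0$ for \refe{Ax1} and $a_{k+1}$ for \refe{Ax2}, which is immaterial by symmetry), introducing the shifts $d_i$, and reading off the resulting constraints. Your explicit remark on how $0\in D=(-D)$ collapses the mirror constraints is a bit more detailed than the paper's treatment, but the underlying argument is identical.
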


\begin{proof}
From the definition,
  $$ T_D^{(k+1)}(A) = |\{(a_1\longc a_{k+1})\in A^{k+1}
                             \colon a_i-a_j\in D,\ 1\le i,j\le k+1\}|. $$
Letting $d_i:=a_{k+1}-a_i\ (i=1\longc k)$, we rewrite this equality as
\begin{align*}
  T_D^{(k+1)}(A)
    &= \sum_{\sub{d_1\longc d_k\in D \\ d_i-d_j\in D}}
         |\{ a_{k+1}\in A\colon a_{k+1}-d_1\longc a_{k+1}-d_k\in A \}| \\
    &= \sum_{\sub{d_1\longc d_k\in D \\ d_i-d_j\in D}}
                                           |A\cap(A+d_1)\longcap(A+d_k)|,
\end{align*}
proving \refe{Ax1}. In a similar way, re-denoting $a_{k+1}$ by $a$, we get
\begin{align*}
   T_D^{(k+1)}(A)
      &= \sum_{a\in A} |\{ (a_1\longc a_k)\in A^k \colon
                   a_1-a\longc a_k-a\in D,\ a_i-a_j\in D \}| \\
      &= \sum_{a\in A} |\{ (a_1\longc a_k)\in (A\cap(D+a))^k \colon
                                           a_i-a_j\in D \}| \\
      &= \sum_{a\in A} T_D^{(k)}(A\cap(D+a))
\end{align*}
which establishes \refe{Ax2}.
\end{proof}

For $1\le n\le p$ we let $J_n:=[1,n]\seq\C_p$.

%
\begin{lemma}\label{l:conv}
For any $k\ge 2$ and any fixed $d_1\longc d_{k-1}\in\C_p$, the sequence
  $$ \big\{ R_{J_n}^{(k)}(d_1\longc d_{k-1})\colon 1\le n\le p-1 \big\} $$
is convex.
\end{lemma}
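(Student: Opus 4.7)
The plan is to rewrite $R_{J_n}^{(k)}(d_1,\dotsc,d_{k-1})$ as the cardinality of a sublevel set on $\C_p$, so that successive differences become the sizes of level sets; then I would establish monotonicity of those sizes via a cyclic-shift injection, which is exactly the convexity statement. Setting $d_0:=0$ and, for each $z\in\C_p$, letting $\phi_i(z)\in\{1,2,\dotsc,p\}$ denote the representative of $z-d_i$ modulo $p$ in that range, I observe that $z\in J_n+d_i$ is equivalent to $\phi_i(z)\le n$; consequently, writing $M(z):=\max_{0\le i\le k-1}\phi_i(z)$ and $g(n):=R_{J_n}^{(k)}(d_1,\dotsc,d_{k-1})$, one has
$$
g(n)=|\{z\in\C_p\colon M(z)\le n\}|.
$$
Denoting the level set $T_m:=\{z\in\C_p\colon M(z)=m\}$, this yields the first-difference identity $g(n)-g(n-1)=|T_n|$, so convexity of the sequence $\{g(n)\}_{1\le n\le p-1}$ reduces to the monotonicity $|T_n|\le|T_{n+1}|$ for $n$ in the appropriate range.

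The main step is to prove this monotonicity by exhibiting an injection $T_n\hookrightarrow T_{n+1}$ for $1\le n\le p-1$, namely the cyclic shift $\sigma(z):=z+1$. The verification is short: if $z\in T_n$ with $n<p$, then $\phi_i(z)\le n<p$ for every $i$, so no wrap-around occurs, and $\phi_i(z+1)=\phi_i(z)+1$ for every $i$; hence $M(z+1)=M(z)+1=n+1$ and $\sigma(z)\in T_{n+1}$. Because $\sigma$ is a bijection of $\C_p$, its restriction to $T_n$ is injective, yielding $|T_n|\le|T_{n+1}|$ and thus the desired convexity.

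The only potentially subtle point, and what I would make sure to track, is the behavior of $\sigma$ at the ``markers'' $z\equiv d_i\pmod p$, where $\phi_i(z)=p$: there the shift resets $\phi_i$ from $p$ back to $1$, so $M$ can drop unpredictably and the argument would fail. Fortunately the hypothesis $n<p$ on the source of the injection excludes precisely these points, since $M(z)=p$ forces $z$ to be a marker. So no casework beyond this one observation is required, and the whole proof consists of fixing the notation, expressing $g(n)$ as a sublevel-set count, and writing down the shift.
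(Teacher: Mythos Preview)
Your proof is correct and is essentially the paper's own argument: in your notation the paper's sets are $B_n=\{z:M(z)\le n\}$, so $B_n\setminus B_{n-1}=T_n$, and the paper's injection $(B_n\setminus B_{n-1})+1\subseteq B_{n+1}\setminus B_n$ is exactly your shift $\sigma\colon T_n\hookrightarrow T_{n+1}$. Introducing the explicit level function $M$ and verifying $M(z+1)=M(z)+1$ whenever $M(z)<p$ is a clean way to package the step the paper ``leaves to the reader,'' but the underlying idea is the same.
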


\begin{proof}
Let
  $$ B_n := J_n \cap(J_n+d_1)\longcap(J_n+d_{k-1}); $$
thus, $|B_n|=R_{J_n}^{(k)}(d_1\longc d_{k-1})$. We have $B_1\seq
B_2\seq\dotsb$ and $B_n+1\seq B_{n+1}$. Furthermore, we observe that if
$x,x+1\in B_n$ for some $x\in\C_p$ and $2\le n\le p-1$, then indeed $x\in
B_{n-1}$; we leave the verification to the reader.

To prove that the sequence $|B_n|$ is convex, we show that
  $$ (B_n\stm B_{n-1})+1 \seq B_{n+1}\stm B_n,\quad 2\le n\le p-2 $$
(which implies $|B_n|-|B_{n-1}|\le|B_{n+1}|-|B_n|$). Indeed, if $x\in B_n\stm
B_{n-1}$, then $x+1\in B_{n+1}$, while from $x\in B_n$ and $x\notin B_{n-1}$
we derive that $x+1\notin B_n$ by the observation just made.
\end{proof}

For a finite subset $S$ of a cyclic group of either infinite or prime order,
we define $\o{S}:=[-(m-1),m]$ if $|S|=2m$, and $\o{S}:=[-m,m]$ if $|S|=2m+1$
(with an integer $m\ge 0$); that is, $\o{S}$ is the interval of size
$|\o{S}|=|S|$, contained in the same underlying group, and centered around
$0$, possibly with a small ``right-end overweight''.

\begin{proof}[Proof of Proposition~\refp{intopt}]
We use induction by $k$. For $k=1$ we have the equalities
  $$ T_D^{(1)}(A) = |A| = |\oA| = T_\oD^{(1)}(\oA); $$
assume now that $k\ge 2$.

For integer $1\le n\le p-1$ let $t(n):=T_\oD^{(k)}(\o{J_n})$; equivalently,
$t(n)=T_\oD^{(k)}(J_n)$. Since the sum of convex sequences is convex, by
\refe{Ax1} and Lemma~\refl{conv}, the sequence $t(n)$ is convex in the range
$1\le n\le p-1$.

From~\refe{Ax2} and the induction hypotheses,
\begin{align*}
   T_D^{(k+1)}(A)
      &= \sum_{a\in A} T_D^{(k)}(A\cap(D+a)) \\
      &\le \sum_{a\in A} T_\oD^{(k)}(\o{A\cap(D+a)}) \\
      &= \sum_{a\in A} t(|A\cap(D+a)|).
\end{align*}
By \cite[Theorem 1]{b:l}, the sequence $\{|A\cap(D+a)|\colon a\in A\}$ is
majorized by the sequence $\{|\oA\cap(\oD+a)|\colon a\in \oA\}$, in the sense
that for any $1\le h\le|A|$, the sum of the $h$ largest terms of the former
sequence does not exceed the sum of the $h$ largest terms of the latter one.
On the other hand, as observed above, $t(n)$ is a convex sequence.
Consequently, by Karamata's inequality, we have
  $$ \sum_{a\in A} T_\oD^{(k)}(\o{A\cap(D+a)})
                       \le \sum_{a\in \oA} T_\oD^{(k)}(\o{A}\cap(\oD+a)). $$
Recalling~\refe{Ax2}, we conclude that the sum in the right-hand side is
$T_\oD^{(k+1)}(\oA)$, which proves the assertion.
\end{proof}

\section{Proof of Theorem~\reft{modp}}\label{s:modp}

\begin{lemma}\label{l:Int}
Suppose that $p\ge 5$ is a prime, and that $D=[-m,m]\seq\C_p$ with
$m=(|D|-1)/2$. If $m<\frac13\,p$, then $T_D^{(k)}(D)=(m+1)^{k+1}-m^{k+1}$.
\end{lemma}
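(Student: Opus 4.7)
The plan is to reduce the problem modulo $p$ to an integer-interval counting problem, and then evaluate the latter by partitioning tuples according to their minimum value.

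First I would observe that under the hypothesis $m<p/3$ one has $2m<p$, so the natural projection $\Z\to\C_p$ is injective on the integer interval $[-2m,2m]$. Since for $d_i,d_j\in D=[-m,m]$ the integer difference $d_i-d_j$ lies in $[-2m,2m]$, the condition $d_i-d_j\in D$ in $\C_p$ is equivalent to $|d_i-d_j|\le m$ as integers. Consequently, $T_D^{(k)}(D)$ equals the number of integer $k$-tuples $(d_1\longc d_k)\in[-m,m]^k$ satisfying $\max_i d_i-\min_i d_i\le m$.

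Next I would partition these tuples by $L:=\min_i d_i$. For $L\in\{-m\longc 0\}$ the admissible window $[L,L+m]$ is contained in $[-m,m]$, so the number of tuples with minimum equal to $L$ is (all tuples in $[L,L+m]^k$) minus (all tuples in $[L+1,L+m]^k$), giving $(m+1)^k-m^k$. For $L\in\{1\longc m\}$ the window must be clipped to $[L,m]$, yielding $(m-L+1)^k-(m-L)^k$ tuples with minimum equal to $L$.

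Finally I would sum over $L$. The first range contributes $(m+1)\bigl[(m+1)^k-m^k\bigr]$, since there are $m+1$ admissible values of $L$. The second range telescopes:
\[
  \sum_{L=1}^{m}\bigl[(m-L+1)^k-(m-L)^k\bigr]=m^k-0^k=m^k.
\]
Adding, one obtains $(m+1)^{k+1}-(m+1)m^k+m^k=(m+1)^{k+1}-m^{k+1}$, as asserted. The only nontrivial ingredient is the initial reduction to integer arithmetic via the bound $m<p/3$ (in fact $2m<p$ suffices); once that is in place, the remainder is a short direct count, and I do not foresee any substantive obstacle.
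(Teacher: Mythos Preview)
Your counting argument is correct and is a genuinely different route from the paper's two proofs. The paper first expresses the set of admissible $k$-tuples as the union $\bigcup_{s=0}^{m}(J-\vec s)$ with $J=[0,m]^k$, and then evaluates $|\bigcup_s(J-\vec s)|$ by inclusion--exclusion (the second proof does the same thing, organized geometrically via faces of the cube). Your approach---partitioning by $L=\min_i d_i$ and summing the contributions---avoids inclusion--exclusion entirely and reduces everything to a single telescoping sum; it is shorter and more elementary. Conversely, the paper's decomposition into translates of a cube makes the structure of the admissible region transparent and generalizes naturally (it is reused, for instance, in the continuous setting of Theorem~\reft{fan}).

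Two small issues in your reduction step deserve attention. First, the assertion that $2m<p$ makes the projection $\Z\to\C_p$ injective on $[-2m,2m]$ is not quite right: that interval contains $4m+1$ integers, so injectivity there would need $4m<p$, which does \emph{not} follow from $m<p/3$. What you actually use, and what does follow from $m<p/3$, is the weaker fact that if $d\in[-2m,2m]$ and $e\in[-m,m]$ satisfy $d\equiv e\pmod p$, then $d=e$ (since $|d-e|\le 3m<p$); this is exactly the equivalence you need. Second, the parenthetical claim that ``in fact $2m<p$ suffices'' is false: with $p=5$ and $m=2$ one has $2m<p$ but $D=\C_5$, so $T_D^{(2)}(D)=25$, whereas $(m+1)^3-m^3=19$. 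The hypothesis $m<p/3$ is genuinely needed. With the reduction step patched as above, your proof goes through cleanly.
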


We give two proofs of this lemma, the second being of more geometric nature.
\begin{proof}[First proof of Lemma~\refl{Int}]
Let $J:=[0,m]^k$, and for an integer $s\in[0,m]$, denote by $\vec{s}$ the
$k$-tuple $(s\longc s)$. From the assumption $m<\frac13\,p$ it is not
difficult to derive that the set of all those $k$-tuples $(x_1\longc x_k)\in
D^k$ with $x_i-x_j\in D$ for all $i,j\in[1,k]$ (which are the $k$-tuples
counted in $T_D^{(k)}(D)$) is the union of the translates $J-\vec s$ over all
$s\in[0,m]$. Therefore, using the inclusion-exclusion formula,
  $$ \textstyle T_D^{(k)}(D) = \left| \bigcup_{s=0}^m (J-\vec{s})\right|
       = \sum_{\est\ne S\seq[0,m]}
                 (-1)^{|S|-1} \left|\bigcap_{s\in S}(J-\vec{s})\right|. $$
With $s_{\mathrm{min}},\, s_{\mathrm{max}}\in[0,m]$ denoting the smallest and
the largest elements of $S$, respectively, the intersection in the right-hand
side is the cube $[-s_{\mathrm{min}},m-s_{\mathrm{max}}]^k$. Consequently,
  $$ T_D^{(k)}(D) = \sum_{l=0}^m (m+1-l)^k \sum_S (-1)^{|S|-1}, $$
where the inner sum extends onto all subsets $\est\ne S\seq[0,m]$ with
$s_{\mathrm{max}}-s_{\mathrm{min}}=l$. It remains to notice that this sum is
equal to $m+1$ if $l=0$ and to $-m$ if $l=1$, and that it vanishes for any
$l\ge 2$ (indeed, for any fixed $0\le x<y\le m$ with $y\ge x+2$ there are
equally many even-sized and odd-sized subsets $S\seq[x,y]$ with $x,y\in S$).
\end{proof}

\begin{proof}[Second proof of Lemma~\refl{Int}]
As in the first proof, let $J:=[0,m]^k$, and for $s\in[0,m]$, denote by
$\vec{s}$ the $k$-tuple $(s\longc s)$; we want to show that the translates
$J-\vec s\ (s\in[0,m])$ jointly contain as many as $(m+1)^{k+1}-m^{k+1}$
elements.

The cube $J$ has $2k$ faces, any face is obtained by setting one of the
coordinates to $0$ or $m$. Of these $2k$ faces, $k$ are ``visible'' from the
origin, with one parallel invisible face corresponding to any visible one.
Any such pair of faces contributes to $T_D^{(k)}(D)$ a parallelepiped of
volume $(m+1)^{k-1} (m+1) = (m+1)^k$ (because the distance between the two
hyperplanes containing any pair of faces is $m$). Each of these
parallelepipeds contains the set $\{-\vec{s}\colon s\in[0,m]\}$. One can
check by induction that the intersection of any $j$ parallelepipeds is a cube
of codimension $j-1$, and with the edge length $m+1$. Hence, its volume is
$(m+1)^{k+1-j}$. Also, the cube $[1,m]^k \subseteq \bigcup_{s\in [0,m]}
(J-\vec{s})$ has size $m^k$ and does not belong to any of these
parallelepipeds. Thus by the inclusion-exclusion formula we have the total
contribution of
\begin{multline*}
  \sum_{j=1}^k \binom{k}{j} (-1)^{j+1} (m+1)^{k+1-j} + m^k \\
                  = -(m+1) (m^k - (m+1)^k) + m^k = (m+1)^{k+1}-m^{k+1}.
\end{multline*}
\end{proof}

Using elementary calculus, one can show that if $|D|=2m+1\ge k\ge 3$, then
the expression $(m+1)^{k+1}-m^{k+1}$ in the statement of Lemma~\refl{Int}
does not exceed $3k2^{-k-1}|D|^k$. This establishes Corollary~\refc{intopt}
as an immediate consequence of the lemma and Proposition~\refp{intopt}.

\begin{proof}[Proof of Theorem~\reft{modp}]
We write $D:=A-A$ and let $k:=\lfloor\del^{-1}\rfloor$; thus $k\ge 2$.

The case $|A|=1$ is ruled out by the assumption $K<|A|^\del$, and we assume
that $|A|\ge 2$. By the Cauchy-Davenport theorem,
\begin{equation}\label{e:small_A}
      |A|^\del > K \ge \frac{|D|}{|A|} \ge 2-\frac1{|A|} \ge \frac 32
\end{equation}
whence $|D|\ge|A|\ge(3/2)^{1/\del}>1/\del\ge k$. Therefore,
 $T_D^{(k)}(D)\le 3k2^{-k-1}|D|^k$ by Corollary~\refc{intopt}.
Combining this estimate with \refe{basic}, we get
  $$ |A|^{2k+2} \le (|A|^{k+1}+(\mu(A))^k|A|^2)\cdot 3k2^{-k-1} |D|^k; $$
thus, at least one of
  $$ \frac58\,|A|^{2k+2} \le |A|^{k+1}\cdot 3k2^{-k-1} |D|^k $$
and
  $$ \frac38\,|A|^{2k+2} \le (\mu(A))^k|A|^2\cdot 3k2^{-k-1} |D|^k $$
holds true. This means that we have either
  $$ |A|^{k+1} \le \frac{12}5\,k2^{-k} |D|^k $$
or
  $$ |A|^{2k} \le (\mu(A))^k\cdot 4k2^{-k} |D|^k. $$

In the first case, in view of $K^k\le K^{1/\del}<|A|$, we obtain
  $$ |A| \le \frac{12}5\,k2^{-k}K^k < \frac{12}5\,k2^{-k} |A|, $$
which is impossible in view of $2^{k-2}>\frac35\,k$.

In the second case, in view of $k>1/(2\del)$, we have
  $$ \mu(A) \ge \frac{2|A|^2}{|D|} e^{-\ln(4k)/k}
     \ge 2K^{-1}|A|e^{-2\del\ln(2/\del)}
                              \ge 2K^{-1}|A|(1-2\del\ln(2/\del)). $$
\end{proof}

\section{Proof of Theorem~\reft{arbG}}\label{s:arbG}

We start with an estimate for the quantity $T_D^{(k)}(D)$ in the situation
where $D$ is a large subset of a finite abelian group.

\begin{lemma}\label{l:Sigma_large}
Suppose that $k\ge 1$ is an integer, $G$ is a finite abelian group, and
$D\seq G$ is a subset with $0\in D=(-D)$. If $|G\stm D|=\tau |D|$ with
$0<\tau\le\min\{1/2,2/(k^2-k+2)\}$, then
\begin{equation}\label{e:Sigma_large}
  T_D^{(k)}(D) \le \left( 1 - \frac14\,k(k-1) \tau\right) |D|^k.
\end{equation}
\end{lemma}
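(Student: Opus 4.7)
The plan is to apply a two-term Bonferroni inequality to the complement of $T_D^{(k)}(D)$. For each pair $1 \le i < j \le k$, introduce the ``bad'' set
\[ B_{ij} := \{(d_1\longc d_k) \in D^k : d_i - d_j \notin D\}, \]
so that $|D|^k - T_D^{(k)}(D) = \big|\bigcup_{i<j} B_{ij}\big|$, and the task becomes lower-bounding this union by $\tfrac{1}{4}k(k-1)\tau|D|^k$.

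The single fact I rely on is the elementary density estimate
\[ r_D(e) = |D \cap (D+e)| \ge 2|D| - |G| = (1-\tau)|D|,\qquad e \in G, \]
which follows from $|D \cup (D+e)| \le |G|$. From it I would extract two consequences. First, summing $r_D(e)$ over $e \in G \stm D$ and separately over $e \in D$ gives $\tau(1-\tau)|D|^2 \le N \le \tau|D|^2$, where $N := |\{(a,b) \in D^2 : a - b \notin D\}|$. Second, for any fixed $a \in D$ the set $\{b \in D : a - b \notin D\}$ equals $D \cap (a + (G\stm D))$ (using $D = -D$) and has size $|D| - r_D(a) \le \tau|D|$.

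Since $|B_{ij}| = N|D|^{k-2}$, the linear Bonferroni term is $\sum_{i<j}|B_{ij}| \ge \binom{k}{2}\tau(1-\tau)|D|^k$. For the quadratic term I would bound $|B_{ij} \cap B_{i'j'}|$ for any two distinct pairs by splitting on whether $\{i,j\}$ and $\{i',j'\}$ are disjoint or share one index. In the disjoint case the count factors as $N^2|D|^{k-4}$; in the shared case, fixing the common coordinate leaves two independent constraints each restricting a free coordinate to a set of size $\le \tau|D|$, giving $|D|\cdot(\tau|D|)^2\cdot|D|^{k-3}$. In either case $|B_{ij}\cap B_{i'j'}| \le \tau^2|D|^k$.

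Bonferroni then yields
\[ \Big|\bigcup_{i<j} B_{ij}\Big| \ge \binom{k}{2}\tau(1-\tau)|D|^k - \binom{\binom{k}{2}}{2}\tau^2|D|^k = \frac{k(k-1)}{2}\tau|D|^k\Big[1 - \frac{k^2 - k + 2}{4}\,\tau\Big]. \]
The hypothesis $\tau \le 2/(k^2-k+2)$ is precisely what makes the bracketed factor $\ge 1/2$, which delivers the desired inequality; the auxiliary condition $\tau \le 1/2$ covers the $k = 2$ case, where there are no pairs of pairs and one only needs $1-\tau \ge 1/2$. The only thing to verify carefully is the case split for $|B_{ij} \cap B_{i'j'}|$; after that the proof reduces to the algebraic check that the bracket hits $1/2$ exactly at the threshold $\tau = 2/(k^2-k+2)$.
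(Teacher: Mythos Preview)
Your proof is correct. The Bonferroni lower bound, the computation of $|B_{ij}|=N|D|^{k-2}$, the two-case estimate $|B_{ij}\cap B_{i'j'}|\le\tau^2|D|^k$, and the algebra leading to the bracketed factor $1-\tfrac{k^2-k+2}{4}\tau$ all check out; the hypothesis $\tau\le 2/(k^2-k+2)$ is exactly what pins the bracket at $\ge 1/2$, and the cases $k=1,2$ are handled as you describe.

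Your route is genuinely different from the paper's. The paper proceeds by induction on $k$: writing $D=1-C$ for the indicator of the complement, it expands $D(y_1)\dotsb D(y_k)$ to second order in $C$, sums over $(x_1,\dots,x_k)\in\Omega_k$ and $x\in D$, and obtains a recurrence bounding $T_D^{(k+1)}(D)$ in terms of $T_D^{(k)}(D)$ and $T_D^{(k-1)}(D)$; closing the induction then requires verifying a quadratic inequality in $\tau$ whose threshold is again $2/(k^2+k+2)$. Your argument is non-inductive and more transparent: it identifies the complement $|D|^k-T_D^{(k)}(D)$ directly as a union of ``bad'' events and truncates inclusion--exclusion at the pairwise level. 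Both approaches rest on the same two density facts (namely $r_D(e)\ge(1-\tau)|D|$ and $|D\cap(a+C)|\le\tau|D|$), and it is pleasant that both produce the identical threshold $\tau\le 2/(k^2-k+2)$. The paper's approach generalizes more readily if one wanted sharper constants via higher-order expansions, while yours is shorter and avoids the bookkeeping of the inductive step.
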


\begin{proof}
Let $C:=G\stm D$. Throughout the proof, it will be convenient to identify the
sets $C$ and $D$ with their indicator functions; thus, for instance,
$D(x)+C(x)=1$ for any $x\in G$.

Let
  $$ \Ome_k:=\{(x_1\longc x_k)\in D^k\colon x_i-x_j\in D,\ 1\le i,j\le k\}. $$

We use induction on $k$.

The case $k=1$ agrees with~\refe{Sigma_large} in view of $T^{(1)}_D(D)=|D|$.
Furthermore,
\begin{align*}
  T_D^{(2)}(D)
    &=  |\Ome_2| \\
    &=  \sum_{x\in D} |D\cap(D+x)| \\
    &=  \sum_{x\in D} (|D|-|C+x|+|C\cap(C+x)|) \\
    &\le (|D|-|C|)|D| + \sum_{x\in G} |C\cap(C+x)| \\
    &=  (|D|-|C|)|D| + |C|^2 \\
    &=  (1-\tau+\tau^2)|D|^2
\end{align*}
which, in view of the assumption $\tau\le\frac12$, settles the case $k=2$.

We now estimate the quantity $T^{(k+1)}_D(D)$ assuming that
 $k+1\ge 3$ and $\tau\le2/(k^2+k+2)$.

For any $y_1\longc y_{k}\in G$ we have
\begin{align}
   D(y_1)\dotsb D(y_{k})
     &= D(y_2)\dotsb D(y_{k}) - C(y_1)D(y_2)\dotsb D(y_{k}) \notag \\
     &= D(y_3)\dotsb D(y_{k}) - C(y_1)D(y_2)\dotsb D(y_{k})
                             - C(y_2)D(y_3)\dotsb D(y_{k})  \notag \\
     &\qquad\qquad\qquad  \vdots  \notag \\
     &= 1 - \sum_{i=1}^{k} C(y_i)D(y_{i+1})\dotsb D(y_{k}) \label{e:Dy}
\end{align}
and, similarly,
  $$ D(y_{i+1})\dotsb D(y_{k})
                    = 1-\sum_{j=i+1}^{k} C(y_j)D(y_{j+1})\dotsb D(y_k). $$
Multiplying this identity by $C(y_i)$ and substituting the result
into~\refe{Dy} we get
\begin{align*}
   D(y_1)\dotsb D(y_{k})
      &= 1 - \sum_{i=1}^{k} \Big( C(y_i)
           - \sum_{j=i+1}^k C(y_i) C(y_j) D(y_{j+1})\dotsb D(y_k) \Big) \\
      &= 1 - \sum_{i=1}^{k} C(y_i)
           + \sum_{1\le i<j\le k} C(y_i) C(y_j) D(y_{j+1})\dotsb D(y_k).
\end{align*}
We let $y_i=x-x_i$ and take sums to obtain
\begin{align*}
   T_D^{(k+1)}(D)
       &= \sum_{\sub{(x_1\longc x_{k})\in\Ome_{k} \\ x\in D}}
                                              D(x-x_1)\dotsb D(x-x_{k}) \\
       &= \sum_{\sub{(x_1\longc x_{k})\in\Ome_{k} \\ x\in D}} 1
            - \sum_{\sub{(x_1\longc x_{k})\in\Ome_{k} \\ x\in D}}
                                                \sum_{i=1}^k C(x-x_i) \\
            &\qquad\qquad +\sum_{\sub{(x_1\longc x_{k})\in\Ome_{k} \\ x\in D}}
            \sum_{1\le i<j\le k} C(x-x_i) C(x-x_j) D(x-x_{j+1})\dotsb D(x-x_k) \\
       &= \sig_1-\sig_2+\sig_3,
\end{align*}
where each of $\sig_1,\sig_2,\sig_3$ denotes the corresponding sum in the
right-hand side. Clearly, we have $\sig_1=T^{(k)}_D(D)|D|$. Furthermore,
\begin{align*}
   \sig_2
     &= k \sum_{\sub{(x_1\longc x_{k})\in\Ome_{k} \\ x\in D}} C(x-x_k) \\
     &= k \sum_{\sub{(x_1\longc x_{k})\in\Ome_{k} \\ x\in G}}
                                          (1-C(x))\, C(x-x_k) \\
     &= \tau |D|\cdot kT^{(k)}_D(D)
     - k\sum_{\sub{(x_1\longc x_{k})\in\Ome_{k} \\ x\in G}}
                                              C(x)\, C(x-x_k) \\
     &\ge \tau |D|\cdot kT^{(k)}_D(D)
                                  - k\tau^2|D|^2 T^{(k-1)}_D(D)
\end{align*}
(the inequality follows by allowing $x_k$ to take all values from $G$).

Finally, for any fixed $1\le i<j\le k$,
  $$ \sum_{x_i,x\in D} C(x-x_i)C(x-x_j)
       \le \sum_{x_i\in D} |(C+x_i)\cap(C+x_j)| \le |C|^2 = \tau^2|D|^2. $$
Consequently, $\sig_3\le\binom k2 T^{(k-1)}_D(D) \tau^2|D|^2$.
Therefore
\begin{align*}
   T_D^{(k+1)}(D)
       &\le T^{(k)}_D(D)|D| - \tau kT^{(k)}_D(D)|D|
          + k\tau^2 T^{(k-1)}_D(D)|D|^2 \\
       &\qquad\qquad + \binom k2\,\tau^2 T^{(k-1)}_D(D)|D|^2 \\
       &=(1-\tau k)T^{(k)}_D(D)|D|
                         + \frac12\,k(k+1) \tau^2 T^{(k-1)}_D(D)|D|^2.
\end{align*}
In view of the induction hypothesis~\refe{Sigma_large}, it suffices to prove
that
\begin{multline*}
  (1-\tau k)\Big(1-\frac14\,k(k-1)\tau\Big)
        + \frac12k(k+1)\tau^2\Big(1-\frac14\,(k-1)(k-2)\tau\Big) \\
        \le 1-\frac14\,k(k+1)\tau.
\end{multline*}
Expanding and simplifying, this inequality is equivalent to
  $$ (k^2-1)(k-2)\tau^2 -2(k^2 + k + 2)\tau + 4 \ge 0 $$
which clearly holds true under the assumptions $k\ge 2$ and
$\tau\le2/(k^2+k+2)$ made at the beginning of the proof.
\end{proof}

\begin{proof}[Proof of Theorem~\ref{t:arbG}]
Let $D:=A-A$, $\tau:=|G\stm D|/|D|$, and $k:=\lcl 1/\sqrt{\tau}\rcl+1$.
Notice that the assumption $|D|=(1-\eps)|G|$ implies $\tau=\eps/(1-\eps)$ and
$\eps=\tau/(\tau+1)$; along with $\eps\le2^{-5}$, this leads to
\begin{equation}\label{e:074}
  \frac1k > \frac{\sqrt\tau}{2\sqrt\tau+1}
  = \frac{\sqrt\eps}{2\sqrt\eps+\sqrt{1-\eps}} > 0.747\sqrt\eps.
\end{equation}

Furthermore, from $|D|<|A|^{1+\sqrt{\eps}/2}$ and $|A|\ge2^{10}$ we get
\begin{multline*}
   |A|^{k-1}|D|^k / |A|^{2k} \le|A|^{-(k+1)+k(1+\sqrt{\eps}/2)}
       = |A|^{k\sqrt{\eps}/2-1} \\
       < |A|^{\sqrt{\eps/\tau}/2+\sqrt{\eps}-1}
         = |A|^{\sqrt{1-\eps}/2+\sqrt{\eps}-1} < |A|^{-0.331} < 0.101.
\end{multline*}

As a result, applying the estimate \eqref{e:basic} and Lemma
\ref{l:Sigma_large},
\begin{align*}
  |A|^{2k}
     &\le T_D^{(k)}(D) \cdot (|A|^{k-1} + (\mu(A))^k) \\
     &\le \left( 1 - \frac14\,k(k-1)\tau \right)
                                   \cdot (|A|^{k-1} + (\mu(A))^k) |D|^k \\
     &< \frac34 \cdot \Big(0.101|A|^{2k} + (\mu(A))^k |D|^k\Big)
\end{align*}
whence
  $$ (\mu(A))^k |D|^k > 1.232\,|A|^{2k} $$
and then
  $$ \mu(A) > \frac{|A|^2}{|D|} \left(1.232\right)^{1/k}. $$
Finally, recalling~\refe{074},
  $$ 1.232^{1/k} > 1+\frac1k\ln(1.232) > 1 + \frac18\,\sqrt\eps. $$
\end{proof}


\section{Proof of Theorem~\reft{extD}}\label{s:extD}

\begin{proof}[Proof of Theorem~\reft{extD}]
Let $D:=A-A$, and let $\mu^{(k)}(A):=\max R_A^{(k)}(d_1\longc d_{k-1})$ where
the maximum extends over all $(k-1)$-tuples $(d_1\longc d_{k-1})\in D^{k-1}$
with pairwise distinct, nonzero components $d_i$.

Let $l:=\lfl1/(2\del(k-1))\rfl$; we notice that, in view of
  $$ \frac1{2\del(k-1)}-\frac1{3\del k} >
       \frac1{2\del(k-1)}-\frac1{3\del(k-1)}
       = \frac1{6\del(k-1)} > \frac1{6\del k}>1, $$
we have
\begin{equation}\label{e:l}
  \frac1{3\del k} < l \le \frac1{2\del(k-1)}.
\end{equation}

Similarly to~\refe{basic}, from~\refe{sumC} and~\refe{Ckl}, using H\"older's
inequality we get
\begin{align}
    |A|^{k(l+1)}
      &=  \left( \sum_{d_1,\dots, d_{l}\in D}
                      (R_A^{(l+1)}(d_1\longc d_l)) \right)^k \label{e:kl} \\
      &\le (T^{(l)}_D (D) )^{k-1} \cdot \sum_{d_1\longc d_{l}\in D}
                              (R_A^{(l+1)}(d_1\longc d_l))^k \notag \\
      &= (T^{(l)}_D(D))^{k-1} \cdot \sum_{d_1\longc d_{k-1}\in D}
                       (R_A^{(k)}(d_1\longc d_{k-1}))^{l+1}. \notag
\end{align}
Let $\sig_0$ denote the part of the sum in the right-hand side extending over
the $(k-1)$-tuples $(d_1\longc d_{k-1})$ with either $d_i=0$, or $d_i=d_j$
with some $i,j\in[1,n],\ i\ne j$, and let $\sig_1$ be the sum over the
$k$-tuples $(d_1\longc d_{k-1})$ with all components $d_i$ distinct from $0$
and from each other.

Using~\refe{l}, we obtain
\begin{align*}
  (T^{(l)}_D(D))^{k-1} \sig_0
      &\le |D|^{l(k-1)} \cdot k^2|A|^l\,\sum_{d_1\longc d_{k-2}\in D}
                                         R_A^{(k-1)}(d_1\longc d_{k-2}) \\
      &\le |A|^{(1+\del)l(k-1)} k^2 |A|^{k+l-1} \\
      &=   k^2 |A|^{k(l+1)-1+\del l(k-1)} \\
      &\le k^2 |A|^{k(l+1)-1/2} \\
      &\le \frac12 |A|^{k(l+1)}.
\end{align*}
Comparing this estimate with~\refe{kl}, we conclude that
$(T^{(l)}_D(D))^{k-1} \sig_1\ge\frac12 |A|^{k(l+1)}$. On the other hand, we
have $\sig_1\le(\mu^{(k)}(A))^l|A|^{k}$. Therefore,
  $$ (\mu^{(k)}(A))^l|A|^{k} (T^{(l)}_D(D))^{k-1}
                                         \ge \frac12\,|A|^{k(l+1)} $$
whence, by Corollary~\refc{intopt}
\begin{align*}
  \mu^{(k)}(A) &> ((3l2^{-l-1}|D|^{l}))^{-(k-1)/l}\cdot2^{-1/l}|A|^{k} \\
    &= \frac{2^{k-1}|A|}{|K|^{k-1}}\,((3/2)l)^{-(k-1)/l}2^{-1/l} \\
    &> \frac{2^{k-1}|A|}{|K|^{k-1}}\,(3l)^{-(k-1)/l}.
\end{align*}

Finally, using~\refe{l} again,
  $$ (3l)^{-(k-1)/l} > 1-3(k-1)\,\frac{\ln(3l)}{3l}
      > 1 - 3k\,\frac{\ln(1/k\del)}{1/k\del}
       > 1-3\del k^2\ln(1/k\del). $$
\end{proof}

\section{Proof of Theorem~\reft{fan}}\label{s:fan}

So far, we have dealt with finite sets, which can be identified with their
characteristic functions. More generally, instead of the sets, one can
consider \emph{any} finitely (or compactly) supported nonnegative function
$f$, and study the quantities
  $$ R^{(k)}_f(x_1\longc x_{k-1})
       := \sum_{x\in\supp f} f(x)f(x+x_1)\dotsb f(x+x_{k-1}),
                                          \quad x_1\longc x_{k-1}\in G $$
(cf.~\refe{Rdef}) aiming to show that some of these quantities are
``atypically large''.

In keeping with this interpretation, instead of \refe{sumC} and \refe{Ckl} we
use the identities
  $$ \sum_{x_1\longc x_{k-1}\in G}
                               R^{(k)}_f(x_1\longc x_{k-1}) = \|f\|_1^k $$
and
  $$ \sum_{x_1\longc x_{k-1}\in G}\big(R^{(k)}_f(x_1\longc x_{k-1})\big)^l
        = \sum_{y_1\longc y_{l-1}\in G}
                              \big(R^{(l)}_f(y_1\longc y_{l-1})\big)^k $$
implying
\begin{align}
  \|f\|_1^{2k+2}
   &\le \sum_{x_1,\dots,x_k} (R^{(k+1)}_f (x_1,\dots,x_k))^2
               \cdot |\supp (R^{(k+1)}_f)| \notag \\
   &\le T^{(k)}_D (D) \sum_{x\in G}\big(R_f^{(2)}(x) \big)^{k+1},
                                                         \label{f:basic_f}
\end{align}
where $D=\supp(f)$, cf.~\refe{basic}.

To illustrate this approach, consider a finite, nonempty subset $A$ of an
abelian group $G$, and let $D:=A-A$ and $f=r_A$; thus, $\supp(f)=D$, and
$\|f\|_1=|A|^2$. Suppose that $K:=|D-D|/|D|<|D|^\del$ for a sufficiently
small $\del>0$. If $G$ is cyclic and either infinite, or of prime order
$p\ge\frac32\,|D|-1$, then following the proofs of Theorems \reft{modp} and
\reft{intverD} one can show that there exists a nonzero group element $s$
such that
  $$ (f\circ f) (s) = | \{  (x,y,z,w) \in A^4\colon x+y-z-w = s\} |
           \ge (2+o_\del (1)) \frac{|A|^4}{K|D|}. $$

As another illustration, we prove Theorem \reft{fan}.

\begin{proof}[Proof of Theorem~\ref{t:fan}]
Our argument uses continuous analogs of the quantities and results developed
above in the discrete settings. We sketch the proof leaving it to the reader
to work out the missing details.

Let $D:=[-1,1]$ and
 $\ome:=\|f\|_1^{-2} \sup_{x\in \mathbb R\stm [-\del,\del]}(f\circ f)(x)$.
We have
  $$ \int_\R(f(x)-\|f\|_1)^2\,dx
                       = \|f\|_2^2 - \|f\|_1^2 = (\rho^2-1)\|f\|_1^2 $$
and by the Cauchy-Schwarz inequality, for any $s\in\R$,
\begin{align*} \ome
  \|f\|_1^2 &\ge (f\circ f)(s) \\
     &= \|f\|_1^2 + \int_\R f(x)(f(x+s)-\|f\|_1)\, dx \\
     &\ge\|f\|_1^2-\|f\|_2\, \sqrt{\rho^2-1}\,\|f\|_1.
\end{align*}
Consequently,
\begin{equation*}\label{e:ommm}
   \ome \ge 1-\rho\sqrt{\rho^2-1}.
\end{equation*}

Suppose first that $\rho\le e^{2\sqrt[4]{2\del}}$. Then
$\rho<1+4\sqrt[4]{2\del}$ and
$\rho\sqrt{\rho+1}<\rho\sqrt{2\rho}\le\sqrt{2}e^{3\sqrt[4]{2\del}}$, and it
follows that
  $$ \rho\sqrt{\rho^2-1}
        < 2\sqrt[8]{2\del} \cdot \sqrt{2}e^{3\sqrt[4]{2\del}}
           < 8\sqrt[8]{2\del}, $$
implying the assertion.

Suppose now that $\rho\ge e^{2\sqrt[4]{2\del}}$.

As a continuous version of \eqref{f:basic_f}, we have
\begin{align*}
  \|f\|_1^{2k+2}
     &\le \int_{-1}^1 (f\circ f)^{k+1}(x)\, dx \cdot T^{(k)}_D(D) \\
     &\le \left( 2\del \|f\|_2^{2k+2}
              + \ome^k \|f\|_1^{2k+2} \right) \cdot T^{(k)}_D(D) \\
     &=   \left( 2\del \rho^{2k+2}
            + \ome^k \right) \|f\|_1^{2k+2} \cdot T^{(k)}_D(D).
\end{align*}
On the other hand, using an argument similar to that in the proof of
Lemma~\refl{Int}, it is easy to see that $T^{(k)}_D(D)=k+1$. Therefore,
\begin{equation}\label{e:omegak}
  \ome^k \ge \frac1{k+1} - 2\del \rho^{2k+2}.
\end{equation}

To optimize, we let $L_1=\log_\rho(1/2\del)$, $L:=L_1-\log_\rho(L_1)$, and
$k:=\lfl L/2 \rfl-1$; thus, $L_1\ge 12$ by the assumptions. Furthermore, from
the inequality $\sqrt{z}\ln(1/z)\le 2\sqrt[4]{z}$ (valid for any positive
$z\le 1$), substituting $z=2\del$ we get
$\sqrt{2\del}\ln(1/2\del)\le\ln(\rho)$ whence $L_1\le\frac1{\sqrt{2\del}}$.
It follows that $\log_\rho(L_1)\le\frac12\log_\rho(1/2\del)=\frac12\,L_1$ and
then $L=L_1-\log_\rho(L_1)\ge \frac12\,L_1\ge 6$, implying $k\ge L/4$. Also,
  $$ (2k+2)\rho^{2k+2}\le L\rho^L\le
       L_1\rho^{L_1-\log_\rho(L_1)}=\rho^{L_1}=\frac1{2\del}. $$
 Therefore, from~\refe{omegak},
   $$ \ome^k\ge\frac1{2(k+1)}. $$
It follows that
  $$ \ome \ge e^{-\frac1k\ln(2(k+1))}
      \ge 1-\frac1k\ln(2(k+1)) \ge 1 - \frac4{L}\ln(L), $$
and to complete the proof we recall that $\frac12\,L_1\le L\le L_1$.
\end{proof}

\vfill

\end{document}